\newtheorem{theorem}{Theorem}
\newtheorem*{theorem*}{Theorem}
\newtheorem{lemma}{Lemma}
\newtheorem*{lemma*}{Lemma}
\newtheorem{proposition}{Proposition}
\newtheorem*{proposition*}{Proposition}
\title{Inversion of the $j$--function and testing complex multiplication}
\author{John Armitage} 
\begin{document}

\maketitle
\thispagestyle{specialfooter}
\section{Introduction}
In this paper we develop an algorithm to invert the $j$--function in quasilinear time, and give an application to testing whether an elliptic curve has complex multiplication.

It is known that the inverse of $j$ is equal to a ratio of two Gaussian hypergeometric functions, though we do not believe any analysis has been made of the running time or required precision to invert $j$ by this method.
Our method is similar to that of \cite{fasttheta}, which makes use of addition formulae between Jacobi's theta functions in order to reduce the argument to a fixed compact set -- here we repeatedly make use of the modular polynomial 
\begin{align*}
 \Phi_2(X,Y)=X^3&+Y^3-X^2Y^2+1488X^2Y+1488XY^2-162000X^2-162000Y^2\\+&40773375XY+8748000000X+8748000000Y-157464000000000,
\end{align*}
which has the property that the roots of $\Phi_2(j(\tau),z)$ are $j(2\tau)$, $j\left(\frac{\tau}{2}\right)$, and $j\left(\frac{\tau+1}{2}\right)$,
 to either compute $j(2^k\tau)$, the logarithm of which is a close approximation to $-2^{k+1}\pi\tau$, or to manipulate the argument of $j$ into a compact set to which Newton's method may be applied.

 We define the \emph{regulated precision} of an approximation $\tilde{\alpha}$ to $\alpha$ to be
\begin{equation*}
 \frac{\lvert\alpha-\tilde{\alpha}\rvert}{\max\{1,\lvert\alpha\rvert\}},
\end{equation*}
and denote by $M(P)$ the computational complexity of multiplication of two $P$--bit integers, which by a recent result of Harvey and Hoeven \cite{fastmult} may be taken to be $O(P\log P)$. We obtain the following,
\begin{theorem}
 Suppose that $\tilde{j}$ is an approximation to $j(\tau)$, $\tau\in\mathcal{F}$, of regulated precision $2^{-P}$, with $P\geq400$. Let $Q=P/6$ if $\lvert \tau-i\rvert\leq2^{-30}$ or $\left\lvert \tau-\frac{\pm1+i\sqrt{3}}{2}\right\rvert\leq2^{-30}$, and $Q=P-\max\{11\log P,100\}$ otherwise. Then we may obtain an approximation to $\tau$ of relative precision $2^{-Q}$ in time
 \begin{equation*}
  O(M(P)\log (P)^2).
 \end{equation*}
\end{theorem}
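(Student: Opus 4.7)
The plan is to iterate the modular polynomial $\Phi_2$ to climb from $j(\tau)$ up to approximations of $j(2^k\tau)$ for $k=1,\dots,K=O(\log P)$, at which point $q_K:=e^{2\pi i\cdot 2^K\tau}$ has magnitude at most $2^{-P}$ and the $q$--expansion $j(\tau)=q^{-1}+744+O(q)$ is dominated by $q^{-1}$; a principal logarithm then recovers $2^K\tau$, and hence $\tau$ by division, to near--full precision. At step $k$, given $\tilde j_{k-1}\approx j(2^{k-1}\tau)$, the three roots of the cubic $\Phi_2(\tilde j_{k-1},Y)=0$ approximate $j(2^k\tau)$, $j(2^{k-2}\tau)$ and $j(2^{k-2}\tau+1/2)$; for $\tau\in\mathcal F$ the first dominates the other two in absolute value by a factor growing doubly--exponentially in $k$, so it is selected unambiguously as the root of largest modulus.

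Each cubic is solved in $O(M(P))$ by a precision--doubling Newton iteration starting from the coarse estimate $Y_0\approx\tilde j_{k-1}^{\,2}$ (justified by $j(2\tau)\approx j(\tau)^2$ deep in the cusp); the $K$ ascent steps therefore contribute $O(M(P)\log P)$ in total. The dominant cost is the final polishing phase of $O(\log P)$ Newton iterations against the fast $j$--evaluator of \cite{fasttheta}, each evaluation costing $O(M(P)\log P)$, giving the claimed $O(M(P)\log^2 P)$. For generic $\tau\in\mathcal F$ the precision loss per ascent step is controlled by the conditioning factor $|\partial_X\Phi_2/\partial_Y\Phi_2|$ evaluated at $(j(2^{k-1}\tau),j(2^k\tau))$; I would derive a uniform positive lower bound on the denominator outside small discs around $i$ and $\rho$, keeping the cumulative loss over the $K$ steps within $O(\log P)$ bits and yielding the stated $Q = P - O(\log P)$.

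Near the elliptic points the ascent degenerates: $\Phi_2(1728,Y)$ has a double root at $Y=287496=j(2i)=j(i/2)$, and one checks from the explicit coefficients that $\Phi_2(0,Y)=(Y-54000)^3$ has a triple root at $Y=54000=j(i\sqrt{3})$. Both the selection and the solution of the target cubic root therefore become ill conditioned as $\tau\to i$ or $\tau\to\rho$. Inside the $2^{-30}$ discs I would abandon the ascent and use the local Puiseux expansions $j(\tau)-1728 = c_1(\tau-i)^2+O((\tau-i)^3)$ and $j(\tau) = c_2(\tau-\rho)^3+O((\tau-\rho)^4)$ to solve for $\tau-i$ or $\tau-\rho$ by extracting a square or cube root, finishing with one Newton iteration against the $j$--evaluator; the order--$2$ ramification at $i$ and order--$3$ ramification at $\rho$, combined with the conditioning loss in a preparatory $\Phi_2$ step, account for the conservative bound $Q=P/6$. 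The main technical obstacle is the explicit uniform conditioning estimates outside the discs together with the error--propagation bookkeeping through all $K$ ascent steps, which together make the informal sketch above rigorous.
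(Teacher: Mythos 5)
Your ascent via $\Phi_2$ matches the paper's strategy for $\text{Im}(\tau)\geq 3$, and your explicit identifications of the triple root of $\Phi_2(0,Y)$ at $Y=54000$ and the double root of $\Phi_2(1728,Y)$ at $Y=287496$ are correct. The main gap is the claim that $\lvert\partial_Y\Phi_2\rvert$ admits a uniform positive lower bound along $(j(2^{k-1}\tau),j(2^k\tau))$ for all $\tau\in\mathcal F$ outside small discs around $i$ and $\rho$. The $Y$-discriminant of $\Phi_2$ vanishes at further $j$-values besides $0$ and $1728$: take $\tau=\frac{1+i\sqrt{15}}{4}$, which lies on the arc $\lvert\tau\rvert=1$ of $\partial\mathcal F$ at distance roughly $0.25$ from $i$ and roughly $0.27$ from $\rho$. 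Writing $\tau_0=-1/\tau=\frac{-1+i\sqrt{15}}{4}\sim\tau$, the matrix $\begin{pmatrix}0&-1\\1&1\end{pmatrix}$ carries $2\tau_0$ to $\tau_0/2$, so $j(2\tau)=j(\tau/2)$ and $\partial_Y\Phi_2\bigl(j(\tau),j(2\tau)\bigr)=0$. Near such a point both the largest-root selection and the Newton solve of the cubic are ill-conditioned, and no uniform bound results from excising discs around $i$ and $\rho$ alone. The paper avoids this entirely by \emph{not} running the ascent on the compact portion of $\mathcal F$: there it evaluates $j$ directly in quasilinear time via Dupont's AGM-based algorithm and applies the secant method, using that $\lvert j'\rvert$ admits a uniform positive lower bound on the compact set once the $2^{-32}$-discs around $i$ and $\rho$ are removed. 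The ascent is reserved for $\text{Im}(\tau)\geq 3$, where $\lvert j\rvert\geq 10^8$ guarantees both the clean magnitude separation of the three roots that you invoke and the conditioning estimate you would need.

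A secondary gap is the treatment of the elliptic points. You propose a Puiseux root extraction followed by ``one Newton iteration against the $j$-evaluator,'' but since $j'(i)=0$ and $j'(\rho)=j''(\rho)=0$, Newton's method on $j(z)-\tilde j$ near $i$ or $\rho$ converges only linearly and the polishing step gains essentially nothing. The paper instead applies $\Phi_2$ once (using the Puiseux expansion only to furnish a starting point for Kantorovich-controlled Newton iteration on the cubic) to pass from the ill-conditioned neighbourhood of $i$ (resp.\ $\rho$) to the neighbourhood of $2i$ (resp.\ $i\sqrt 3$) inside the compact set, where $j'$ is bounded away from zero, and inverts $j$ there; the ramification of $j$ at $i$ and $\rho$ then plays no further role in the conditioning of the final inversion.
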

The $j$--function has two ramification points in its fundamental domain, which entails the loss of precision in its inversion when $j$ is close to $0$ or $1728$.
We apply this algorithm to test for complex multiplication of elliptic curves -- given an approximation to the $j$--invariant of an elliptic curve and a bound upon its height and degree, we may invert it and determine if the inverse is a quadratic irrational, determining also the discriminant,
\begin{theorem}
 Suppose that $j(\tau)$ is the $j$--invariant of an elliptic curve $E$, with $j(\tau)$ of degree bounded by $d$ and height bounded by $H\geq e^e$. Then it may be determined from $d$, $H$, and an approximation to $j$ of regulated precision $2^{-300d^2\log H(\log d+\log\log H)^2-200}$ whether $E$ has complex multiplication, and if so the associated discriminant, in time, letting $T=d^2\log H(\log d+\log\log H)^2$, 
\begin{equation*}
 O(M(T)(\log T)^2).
\end{equation*}
\end{theorem}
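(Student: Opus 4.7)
The plan is to combine Theorem 1 with LLL-based integer relation finding, followed by a verification step. First I would apply Theorem 1 to the given approximation $\tilde j$ to obtain an approximation $\tilde\tau$ to the unique $\tau\in\mathcal{F}$ with $j(\tau)=j(E)$; next I would run LLL on the vector $(1,\tilde\tau,\tilde\tau^2)$ to search for an integer relation $a\tau^2+b\tau+c=0$ with coefficients bounded by a threshold $B$ determined below; finally, if such a relation is returned, I would verify it by computing $j(\tau_0)$ at the candidate $\tau_0=(-b+\sqrt{b^2-4ac})/(2a)$ to the same precision (using Theorem 1 in the forward direction, or the $q$-series when $\mathrm{Im}(\tau_0)$ is large) and comparing with $\tilde j$. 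If the comparison matches within twice the allowed error, I would output the discriminant $D_0=b^2-4ac$; otherwise, declare that $E$ has no complex multiplication.

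To calibrate $B$: if $E$ has CM by an order of discriminant $D$, then $\deg j(E)=h(D)\leq d$, and one Galois conjugate of $j(E)$ is $j(\tau_0)$ for the principal form, with $\mathrm{Im}(\tau_0)=\sqrt{|D|}/2$ and hence $|j(\tau_0)|\geq e^{\pi\sqrt{|D|}}-744$. Comparing this single conjugate against the Mahler-measure bound implied by $H$ yields $\sqrt{|D|}=O(d\log H)$, so taking $B=O(d\log H)$ suffices, and $\log|D|=O(L)$ with $L:=\log d+\log\log H$.

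To rule out false positives when $E$ is not CM I would invoke a Liouville-type inequality: for any quadratic $\tau_0$ with $|\mathrm{disc}(\tau_0)|=O(d^2(\log H)^2)$, either $j(E)=j(\tau_0)$ or
\[
|j(E)-j(\tau_0)|\geq\exp\bigl(-Cd^2\log H\cdot L^2\bigr),
\]
for some explicit constant $C$. This follows by bounding the degree of the compositum $\mathbb{Q}(j(E),j(\tau_0))$ by $d\cdot h(D_0)=O(d^2\log H\cdot L)$ and the naive heights of $j(E)$ and $j(\tau_0)$ by $H$ and $\exp(\pi\sqrt{|D_0|}\,h(D_0))$ respectively, then applying the standard lower bound for the difference of two distinct algebraic integers. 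Choosing $P$ larger than this exponent makes the verification step decisive and yields the stated precision. The running time is dominated by the two applications of Theorem 1 at precision $O(T)$, since LLL on a three-dimensional lattice with $O(P)$-bit entries is absorbed.

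The main obstacle is the interplay between the Liouville bound and the precision-translation step: converting the lower bound on $|j(E)-j(\tau_0)|$ into the required relative precision in $\tilde\tau$ depends on $|dj/d\tau|\asymp e^{2\pi\mathrm{Im}(\tau)}$, which can be as large as $e^{\pi\sqrt{|D|}}$ at CM points. The reason the theorem is stated in terms of regulated rather than absolute precision in $j$ is that the regulated precision is exactly the quantity that translates cleanly to relative precision in $\tau$ via the derivative estimate, so that these exponentials cancel; bookkeeping the constants in this conversion, together with the class-number factor $h(D_0)\leq\sqrt{|D_0|}\log|D_0|$, accounts for the $L^2$ factor in the precision requirement.
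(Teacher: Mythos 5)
Your overall strategy matches the paper's: invert $j$ via Theorem~1, detect whether the preimage is a quadratic irrational of bounded discriminant, and use a Liouville-type separation with the Mahler-measure and class-number bounds to make the decision decisive. Where you diverge is in the recovery and verification steps. The paper develops the continued fractions of $\mathrm{Re}(\tilde z_0)$ and of $\mathrm{Im}(\tilde z_0)^2$ separately, exploiting the explicit form $\tau = -b/2a + i\sqrt{|D|}/2a$, and then verifies by checking whether $|\tilde z_0-\tau|$ is below the separation threshold; you instead run LLL on $(1,\tilde\tau,\tilde\tau^2)$ and verify by re-evaluating $j$ forward at the candidate and comparing to $\tilde j$. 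Both recoveries are viable in principle, though note that $\tilde\tau$ is complex so you must split into real and imaginary parts (two simultaneous relations, or a suitably doubled lattice) rather than a single PSLQ/LLL call as written; the paper's route sidesteps this by working directly with two real numbers. Your verification also requires a forward $j$-evaluation (via Dupont's algorithm), which is fine and the paper uses the same subroutine elsewhere, but the paper's proximity check in $\tau$-space avoids a second layer of precision translation. Two smaller calibration points: the coefficients of the reduced form have $a\leq\sqrt{|D|/3}$ but $c$ can be as large as $O(|D|)=O(d^2(\log H)^2)$, so $B=O(d\log H)$ undershoots the bound you need for $c$; and the paper handles $|D|\leq16$ by a separate finite check because the bound $\mathrm{Im}(\tau_{\mathrm{princ}})=\sqrt{|D|}/2$ only gives $M(j(\tau))\geq e^{3.13\sqrt{|D|}}$ once $|D|\geq16$, a case-split your sketch would also need. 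None of these change the complexity: LLL on a rank-$O(1)$ lattice with $O(T)$-bit entries is absorbed by the inversion cost, so you arrive at the same $O(M(T)(\log T)^2)$. What the paper's route buys is slightly more elementary machinery (continued fractions rather than lattice reduction) and a direct verification that makes the constant-tracking cleaner; what yours buys is a recovery step that generalizes more readily to algebraic relations of higher degree.
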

Previous methods include that of \cite{achter}, based on reduction of elliptic curves at primes, which has an unconditional running time of $O(H^{cd})$, and assuming the Generalized Riemann Hypothesis a running time of $O(d^2(\log H)^2)$, and two tests of \cite{xavier}, comprising a deterministic algorithm based on Galois representations associated to torsion points, with running time $O(d^{c_1}(\log H)^{c_2})$, with an ineffective implicit constant, and a probabilistic algorithm, also of polynomial running time.

We note that one may also apply our algorithm for the inversion of $j$ to detecting isogenies between two elliptic curves of running time $O(N\log N\log\log N)$, where the degree of the isogeny is bounded by $N$, though our implicit constant is ineffective as explicit bounds on the coefficients of modular polynomials $\Phi_N(X,Y)$ for composite indices are not currently available.
\section{Preliminaries}
We will denote by $\mathcal{F}$ the usual fundamental domain of $j(z)$, $\left\{z|-\frac{1}{2}<\text{\normalfont Re}(z)\leq\frac{1}{2},\lvert z\rvert>1\right\}\cup\{z|\lvert z\rvert=1,\text{\normalfont Re}(z)\geq0\}$.
Throughout we will make use of the following results,
\begin{lemma}[Lemma 1 of \cite{j2079}]\label{j_q}
If $\tau\in\mathcal{F}$
 \begin{equation*}
  \left\lvert j(\tau)-e^{-2\pi i\tau}\right\rvert\leq2079.
 \end{equation*}
\end{lemma}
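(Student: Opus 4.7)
The plan is to exploit the Fourier expansion of the $j$-function at the cusp. Writing $q=e^{2\pi i\tau}$, we have $j(\tau)=q^{-1}+\sum_{n\geq 0}c_n q^n$ with $c_0=744$, $c_1=196884$, $c_2=21493760$, and so on, so the quantity to bound is simply
\begin{equation*}
 j(\tau)-e^{-2\pi i\tau}=\sum_{n\geq 0}c_n q^n.
\end{equation*}
Since $\tau\in\mathcal{F}$ forces $\mathrm{Im}(\tau)\geq\sqrt{3}/2$, we get the uniform estimate $|q|\leq e^{-\pi\sqrt{3}}$, which is small (around $0.00433$), so we can hope to sum the series by brute force.

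The key ingredient I would invoke is a classical coefficient bound, namely the Petersson--Rademacher asymptotic $c_n\sim e^{4\pi\sqrt{n}}/(\sqrt{2}\,n^{3/4})$, which easily gives an explicit inequality of the shape $c_n\leq e^{4\pi\sqrt{n}}$ valid for all $n\geq 1$. Combined with $|q|\leq e^{-\pi\sqrt{3}}$, the $n$th term is bounded by $\exp\bigl(4\pi\sqrt{n}-\pi\sqrt{3}\,n\bigr)$, whose exponent is maximised near $n=4/3$ and decays quickly beyond that; in particular, for $n$ larger than some modest cutoff $N$ the tail becomes majorised by a geometric series with ratio well below $1$.

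I would therefore split the sum at some $N$ (perhaps $N=10$ or $20$): for $n\leq N$ I would use the exact numerical values of the coefficients $c_n$ (they are tabulated and grow like $e^{4\pi\sqrt{n}}$), getting a rigorous numerical upper bound on $|c_0|+\sum_{n=1}^{N}|c_n|\,e^{-\pi\sqrt{3}\,n}$, and for $n>N$ I would bound the tail by the geometric series $\sum_{n>N}\exp(4\pi\sqrt{n}-\pi\sqrt{3}\,n)$, which is tiny. A quick sanity check: $|c_0|=744$, $|c_1|\cdot e^{-\pi\sqrt{3}}\approx 853$, $|c_2|\cdot e^{-2\pi\sqrt{3}}\approx 403$, already totalling roughly $2000$, which is consistent with the stated constant $2079$ once one adds in the remaining small contributions.

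The main obstacle is precisely the arithmetic bookkeeping to secure the explicit constant $2079$: one must choose the cutoff $N$ large enough that the tail is negligible, yet control the partial sum tightly enough that the chosen constant is not overshot. This is a finite rigorous numerical verification rather than a conceptual difficulty, so the only subtlety is making sure the coefficient bound $c_n\leq e^{4\pi\sqrt{n}}$ is invoked in a form that genuinely holds for every $n\geq 1$ (not just asymptotically), which one can arrange either by a direct computation for small $n$ together with the Rademacher exact formula for large $n$.
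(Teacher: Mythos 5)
The paper does not supply a proof of this lemma at all --- it is imported verbatim as Lemma~1 of the cited reference (Bilu, Masser, Zannier). So there is nothing internal to compare your argument against; the most useful thing is to assess your proposal on its own merits and against what the cited source must be doing.

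Your strategy is sound and is surely the one the reference itself employs: write $j(\tau)-q^{-1}=\sum_{n\ge 0}c_n q^n$, use $\mathrm{Im}(\tau)\ge\sqrt3/2$ on $\mathcal F$ so that $\lvert q\rvert\le e^{-\pi\sqrt3}\approx 0.00433$, bound finitely many terms numerically and the tail via an explicit coefficient estimate. Two small caveats. First, the bare bound $c_n\le e^{4\pi\sqrt n}$ does hold for every $n\ge 1$ (the leading term of Petersson--Rademacher is $e^{4\pi\sqrt n}/(\sqrt2\,n^{3/4})$, which is well below, and the first few $n$ can be checked by hand: $c_1/e^{4\pi}\approx 0.69$, $c_2/e^{4\pi\sqrt2}\approx 0.41$), but it is not something to assert casually; the explicit reference the paper uses elsewhere (Brisebarre--Philibert) actually gives the slightly weaker $c_n\le 4e^{4\pi\sqrt n}$, which is still adequate since the tail past $n=7$ or so is negligible. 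Second, the constant $2079$ is genuinely tight: plugging in the actual coefficients gives $744+853.1+403.6+70.3+7.1+0.5+0.03+\cdots\approx 2078.7$, so the bookkeeping you describe must be done carefully --- a generous tail estimate that costs even a unit would overshoot. With those two points made explicit, the argument closes.
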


\begin{theorem}[Kantorovich, \cite{kantorovich}]\label{kantorovich}
 Let $F:S(x_0,R)\subset X\to Y$ have a continuous Fr\'{e}chet derivative in $\overline{S(x_0,r)}$. Moreover, let \emph{(i)} the linear operation $\Gamma_0=[F'(x_0)]^{-1}$ exist; \emph{(ii)} $\lVert\Gamma_0 F(x_0)\rVert\leq\eta$; \emph{(iii)} $\lVert\Gamma_0F''(x)\rVert\leq K$ $(x\in\overline{S(x_0,r)})$. Now, if
 \begin{equation*}
  h=K\eta\leq\frac{1}{2}
 \end{equation*}
and 
\begin{equation*}
 r\geq\frac{1-\sqrt{1-2h}}{h}\eta,
\end{equation*}
then $F(x)=0$ will have a solution $x^*$ to which the Newton method is convergent. Here,
\begin{equation*}
 \lVert x^*-x_0\rVert\leq r_0.
\end{equation*}
Furthermore, if for $h<\frac{1}{2}$,
\begin{equation*}
 r<r_1=\frac{1+\sqrt{1-2h}}{h}\eta,
\end{equation*}
or for $h=\frac{1}{2}$
\begin{equation*}
 r\leq r_1,
\end{equation*}
the solution $x^*$ will be unique in the sphere $\overline{S(x_0,r)}$. The speed of convergence is characterized by the inequality
\begin{equation*}
 \lVert x^*-x_k\rVert\leq\frac{1}{2^k}(2h)^{2^k}\frac{\eta}{h}
\end{equation*}
for $k=0,1,2,\ldots$.
\end{theorem}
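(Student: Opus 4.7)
The plan is to prove Kantorovich's theorem by the method of majorants, reducing the Banach-space iteration to the scalar Newton iteration on a dominating polynomial. After replacing $F$ by $G=\Gamma_0 F$ we may assume $G'(x_0)=I$, $\lVert G(x_0)\rVert\leq\eta$, and $\lVert G''(x)\rVert\leq K$ on $\overline{S(x_0,r)}$; the Newton iteration $x_{k+1}=x_k-[G'(x_k)]^{-1}G(x_k)$ is unchanged by the preconditioning. The majorant I would use is
\begin{equation*}
 \phi(t)=\frac{K}{2}t^2-t+\eta,
\end{equation*}
chosen so that $\phi(0)=\eta$, $\phi'(0)=-1$, $\phi''\equiv K$. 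Under $h=K\eta\leq\tfrac{1}{2}$, the roots of $\phi$ are precisely $r_0$ and $r_1$; the scalar Newton iterates $t_{k+1}=t_k-\phi(t_k)/\phi'(t_k)$ starting at $t_0=0$ form an increasing sequence converging monotonically to $r_0$, with a closed-form remainder giving the advertised rate $r_0-t_k\leq 2^{-k}(2h)^{2^k}\eta/h$.

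Next I would prove by simultaneous induction on $k$ the three statements: (a) $G'(x_k)$ is invertible; (b) $x_k\in\overline{S(x_0,r_0)}$; and (c) $\lVert x_{k+1}-x_k\rVert\leq t_{k+1}-t_k$. For (a), the bound $\lVert I-G'(x_k)\rVert\leq Kt_k<1$ follows from the fundamental theorem of calculus applied to $G'$ along the segment from $x_0$ to $x_k$, together with the inductive control from (c), and a Neumann series then yields $\lVert[G'(x_k)]^{-1}\rVert\leq -1/\phi'(t_k)$. For (c), I would apply the Taylor remainder identity
\begin{equation*}
 G(x_{k+1})=\int_0^1(1-s)\,G''\!\bigl(x_k+s(x_{k+1}-x_k)\bigr)\bigl(x_{k+1}-x_k\bigr)^{\otimes 2}\,ds,
\end{equation*}
take norms using $\lVert G''\rVert\leq K$ and the inductive bound on $\lVert x_{k+1}-x_k\rVert$, and compare to the analogous scalar identity for $\phi(t_{k+1})$; this gives $\lVert G(x_{k+1})\rVert\leq\phi(t_{k+1})$, and combining with the bound on $\lVert[G'(x_{k+1})]^{-1}\rVert$ yields $\lVert x_{k+2}-x_{k+1}\rVert\leq t_{k+2}-t_{k+1}$.

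Once the induction is in hand, $\{x_k\}$ is Cauchy in $X$ by comparison with $\{t_k\}$ (a Cauchy real sequence), so it converges to some $x^*\in\overline{S(x_0,r_0)}$; the bound $\lVert G(x_k)\rVert\leq\phi(t_k)\to 0$ together with continuity of $G$ forces $G(x^*)=0$, and the error estimate on $\lVert x^*-x_k\rVert$ is inherited directly from the scalar bound on $r_0-t_k$. For uniqueness in $\overline{S(x_0,r)}$ with $r<r_1$, I would suppose a second zero $y^*$ exists and run the majorant argument in the opposite direction — bounding $\lVert y^*-x_k\rVert$ by the iterates of $\phi$ descending to the larger root — so that $y^*$ lies in the gap between $r_0$ and $r_1$, which is empty on the $x$-side, contradicting the existence of two distinct roots.

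The main obstacle will be the inductive comparison in the second step: one must track simultaneously the Neumann expansion controlling $\lVert[G'(x_k)]^{-1}\rVert$ and the quadratic Taylor remainder for $G(x_{k+1})$, and verify that the product of these two Banach-space quantities is dominated term-by-term by the corresponding scalar product for $\phi$. This matching ultimately relies on the fact that $\phi$ has constant second derivative equal to the global upper bound $K$, so that every local estimate in $X$ is sharpened by its $\phi$-counterpart; writing the comparison out carefully, rather than invoking it implicitly, is where the bulk of the work lies.
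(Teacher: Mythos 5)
The paper does not actually prove this theorem: it is stated as a citation to Kantorovich and Akilov's book and used as a black box throughout. So there is no ``paper's own proof'' to compare against. That said, your sketch is the standard majorant-method argument, which is precisely the proof one finds in the cited reference, so the approach is the right one and the main line of reasoning is sound: reduce to $G=\Gamma_0F$, introduce the quadratic majorant $\phi(t)=\tfrac{K}{2}t^2-t+\eta$ with roots $r_0,r_1$, prove by induction the triple of statements (invertibility of $G'(x_k)$ with $\lVert[G'(x_k)]^{-1}\rVert\leq-1/\phi'(t_k)$, containment $x_k\in\overline{S(x_0,r_0)}$, step bound $\lVert x_{k+1}-x_k\rVert\leq t_{k+1}-t_k$), conclude via Cauchy comparison, and read off the rate from the explicit formula $r_0-t_k=2^{-k}(2h)^{2^k}\eta/h$.

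One place where your sketch is too vague to be convincing as written is the uniqueness step. The phrase ``running the majorant argument in the opposite direction --- bounding $\lVert y^*-x_k\rVert$ by the iterates of $\phi$ descending to the larger root --- so that $y^*$ lies in the gap between $r_0$ and $r_1$'' does not quite describe a correct argument: $y^*$ being ``in the gap'' is not itself a contradiction, and the descent you allude to needs to be specified. The standard device is to introduce a second scalar sequence $s_k$ (the Newton iterates of $\phi$ started at $s_0=r$, which for $r<r_1$ decrease to $r_0$, or the related sequence with $\phi'(t_k)$ replaced in the denominator) and prove inductively that $\lVert y^*-x_k\rVert\leq s_k-t_k$ for any zero $y^*\in\overline{S(x_0,r)}$; letting $k\to\infty$ forces $y^*=x^*$. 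You should also say explicitly that the hypothesis $r\geq r_0$ is what guarantees the bound $\lVert G''\rVert\leq K$ is available on the whole ball where the iterates live. With those two points made precise, your outline is a complete proof of the theorem as used in the paper.
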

We note that the condition 
\begin{equation*}
 r\geq\frac{1-\sqrt{1-2h}}{h}\eta
\end{equation*}
may be replaced with
\begin{equation*}
 r\geq 2\eta.
\end{equation*}

\section{Inversion of $j(z)$}
Firstly, if $\lvert j \rvert\leq 2^{-P/2}$ or $\lvert j-1728\rvert\leq 2^{-P/3}$, we return $\tau = \frac{1+i\sqrt{3}}{2}$ or $\tau = i$ respectively, and otherwise, we split the fundamental domain of $j$ into $4$ sections -- $\text{\normalfont Im}(\tau)\geq 3$, $\lvert\tau-i\rvert\leq 2^{-{31}}$, $\left\lvert\tau-\frac{1+\sqrt{3}}{2}\right\rvert\leq2^{-31}$, $\left\lvert\tau-\frac{-1+\sqrt{3}}{2}\right\rvert\leq2^{-31}$ and the remaining compact subset of the fundamental domain. We may determine $\tau$ with sufficient precision by taking a low precision inverse via the following expression for $j^{-1}$ in terms of Gaussian hypergeometric functions -- with $\alpha$ a solution to 
\begin{equation*}
 j(\tau)=\frac{1728}{4\alpha(1-\alpha)},
\end{equation*}
$\tau$ is equal to either
\begin{align*}
 i\frac{_2F_1\left(\frac{1}{6},\frac{5}{6},1,1-\alpha\right)}{_2F_1\left(\frac{1}{6},\frac{5}{6},1,\alpha\right)}.
\end{align*}
or the negative of its inverse.
If $j$ is sufficiently large, or close to $0$ or $1728$, then we do not need to evaluate this in order to determine which section of the fundamental domain $\tau$ lies in, so we need only compute the above to a fixed precision in a compact set, at points which it is easily shown are bounded away from zeros of $_2F_1\left(\frac{1}{6},\frac{5}{6},1,z\right)$, so this takes only constant time.
\subsection{Large $j$}

Throughout this section we assume $\text{\normalfont Im}(\tau)\geq 3$, and will repeatedly make use of the consequent fact that $\lvert j(\tau)\rvert\geq 10^8$. We will make use of the modular polynomial $\Phi_2(X,Y)$ to obtain an approximation to $j(2\tau)$, and repeat the process until we have an approximation to $j(2^k\tau)$, where $k$ is sufficiently large, at which point taking the logarithm of $j(2^k\tau)$ gives a good approximation to $-2^{k+1}\pi\tau$, owing to the $q$--series of $j$.

\begin{proposition}\label{phi2discrepancy}
 Let $j(\tau)=j$, and suppose that $\text{\normalfont Im}(\tau)\geq3$ and $\tilde{j}$ is an approximation to $j$ of relative precision at least $2^{-P}$, with $P\geq300$. Then the largest root, in absolute value, of $\Phi_2(\tilde{j},z)$ is an approximation to $j(2\tau)$ of relative precision at least $2^{-P+2}$.
\end{proposition}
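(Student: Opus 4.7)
My plan is to write $\Phi_2(j,z)=(z-a)(z-b)(z-c)$ with $a=j(2\tau)$, $b=j(\tau/2)$, $c=j((\tau+1)/2)$, argue that $|a|$ dominates $|b|,|c|$, and apply Rouch\'e's theorem on a small circle around $a$ to locate the corresponding root $\tilde a$ of $\Phi_2(\tilde j,z)$. Since $\text{\normalfont Im}(\tau)\ge 3$, appropriate integer translations place $2\tau$, $\tau/2$ and $(\tau+1)/2$ into $\mathcal{F}$ without changing their $j$--values, and the imaginary parts of the translated points become $2\,\text{\normalfont Im}(\tau)\ge 6$ and $\text{\normalfont Im}(\tau)/2\ge 3/2$ respectively. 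Lemma~\ref{j_q} then yields $|a|\ge e^{4\pi\text{\normalfont Im}(\tau)}-2079$, $|b|,|c|\le e^{\pi\text{\normalfont Im}(\tau)}+2079$, and $|j|\ge e^{2\pi\text{\normalfont Im}(\tau)}-2079\ge 10^8$; in particular $|a|=(1+o(1))|j|^2$ while $|b|,|c|=O(|j|^{1/2})$.

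I would then set up the perturbation. Let $\delta=\tilde j-j$, so $|\delta|\le 2^{-P}|j|$. On any circle $|z-a|=r$ with $r\le|a|/2$, one checks that the monomial $-2XY^2$ dominates $\partial_X\Phi_2(X,Y)$ evaluated at $(X,Y)=(j,z)$, every other monomial being smaller by a factor of order at most $|j|^{-1}$ once $|j|\ge 10^8$. This gives
\[
|\Phi_2(\tilde j,z)-\Phi_2(j,z)|=|\delta|\left|\int_0^1\partial_X\Phi_2(j+t\delta,z)\,dt\right|\le 2(1+o(1))|\delta||j||a|^2,
\]
while on the same circle $|\Phi_2(j,z)|=r|z-b||z-c|\ge r(|a|-|b|-r)(|a|-|c|-r)\ge r|a|^2(1-o(1))$.

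Taking $r=|a|\cdot 2^{-P+2}$, one checks, using $|a|\ge(1-o(1))|j|^2$ and $|\delta|\le 2^{-P}|j|$, that $r|a|^2(1-o(1))>2(1+o(1))|\delta||j||a|^2$, so the Rouch\'e hypothesis holds with room to spare. Hence $\Phi_2(\tilde j,z)$ has exactly one root $\tilde a$ with $|\tilde a-a|<r$, and $|\tilde a-a|/|a|<r/|a|=2^{-P+2}$. Since $|\tilde a|\ge|a|-r$ is far larger than $|b|,|c|$, and the other two roots of $\Phi_2(\tilde j,z)$ lie near $b$ and $c$ by the analogous Rouch\'e argument centred at those roots, $\tilde a$ is indeed the largest root in absolute value.

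The main obstacle is the bookkeeping of the lower-order error terms -- from the sub-dominant monomials of $\Phi_2$, from the non-zero magnitudes of $b,c$, and from the linearisation inside the integral representation of $\Phi_2(\tilde j,z)-\Phi_2(j,z)$ -- all of which shrink as $|j|\to\infty$. The hypotheses $P\ge 300$ and $|j|\ge 10^8$ leave ample slack to absorb them into the factor of $4=2^2$ that separates the sharp perturbation rate $2^{-P+1}$ from the claimed bound $2^{-P+2}$.
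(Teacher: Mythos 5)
Your proposal is correct in substance but takes a genuinely different route from the paper. The paper does not invoke Rouch\'e: it expands $g(z)-f(z)$ coefficient by coefficient (bounding the degree-$2$, degree-$1$ and constant discrepancies directly), writes $g(j(2\tau))=(j(2\tau)-\beta_0)(j(2\tau)-\beta_1)(j(2\tau)-\beta_2)$, and iteratively tightens the estimate on $\lvert j(2\tau)-\beta_0\rvert$ by first crudely locating $\beta_0$ near $j(2\tau)$ and $\beta_1$ near $j(\tau/2)$, bounding $\lvert\beta_2\rvert$ via the constant coefficient of $g$, and only then re-estimating $\lvert j(2\tau)-\beta_0\rvert$. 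Your Rouch\'e formulation packages the same size hierarchy ($\lvert a\rvert\sim\lvert j\rvert^2\gg\lvert b\rvert,\lvert c\rvert\sim\lvert j\rvert^{1/2}$, $\lvert\delta\rvert\le 2^{-P}\lvert j\rvert$) into a single topological count and avoids the bootstrapping; the trade-off is that the paper's explicit factoring produces the concrete constants ($0.97$, $1.18$, etc.) that its later propositions rely on, whereas your $o(1)$ bookkeeping would need to be instantiated numerically to be used downstream.

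The one step you should tighten is the claim that the other two roots ``lie near $b$ and $c$ by the analogous Rouch\'e argument centred at those roots.'' The paper's bounds give $\lvert b\rvert,\lvert c\rvert\in[0.83,1.17]\lvert j\rvert^{1/2}$, but nothing prevents $b$ and $c$ from being close to each other (indeed near the corners of $\mathcal F$ they coalesce), so two disjoint Rouch\'e discs around $b$ and $c$ need not exist. The clean fix is a single Rouch\'e comparison on the circle $\lvert z\rvert = 2\lvert j\rvert^{1/2}$, which encloses both $b$ and $c$ but not $a$: there $\lvert\Phi_2(j,z)\rvert\gtrsim \lvert j\rvert^3$ while $\lvert\Phi_2(\tilde j,z)-\Phi_2(j,z)\rvert\lesssim\lvert\delta\rvert\lvert j\rvert^2\le 2^{-P}\lvert j\rvert^3$, so exactly two roots of $\Phi_2(\tilde j,z)$ lie inside, forcing $\tilde a$ to be the unique large root. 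Alternatively, bound the product of the two small roots via the constant coefficient of $\Phi_2(\tilde j,z)$, as the paper does for $\beta_2$. With either repair the argument is complete.
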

\begin{proof}
 Let $f(z) = \Phi_2(j,z)$ and $g(z) =\Phi_2(\tilde{j},z)$. We first bound the coefficients of $f(z)-g(z)$. For the coefficient of $z^2$, we have, with $\tilde{j}=j+\delta$, 
 \begin{align*}
  \left\lvert -j^2+1488j-162000-(-(j+\delta)^2+1488(j+\delta)-162000)\right\rvert &= \lvert 2\delta j+\delta^2+1488\delta\rvert\\
  &\leq 2.1\lvert\delta\rvert\lvert j\rvert,
 \end{align*}
as $\lvert \delta\rvert\leq2^{-300}\lvert j\rvert$ and $\lvert j\rvert\geq 10^8$.
For the coefficient of $z$, we have
\begin{align*}
  &\left\lvert 1488j^2 +40773375j+8748000000-(1488(j+\delta)^2+40773375(j+\delta)+8748000000)\right\rvert\\
  &\quad\quad\quad= \lvert 2976\delta j+1488\delta^2+40773375\delta\rvert\\
  &\quad\quad\quad\leq 3210\lvert\delta\rvert\lvert j\rvert.
 \end{align*}
 For the constant coefficient, we have
 \begin{align*}
  &\left\lvert 3\delta j^2+3\delta^2j+\delta^3-16200\delta j-16200\delta^2+8748000000\delta\right\rvert\\
  &\quad\quad\quad\leq 3.4\lvert\delta\rvert\lvert j\rvert^2.
 \end{align*}
 We now bound the values of $g(j(\tau'))$ for $\tau'\in\left\{2\tau,\frac{\tau}{2},\frac{\tau+1}{2}\right\}$.
 For $j(2\tau)$, as $\lvert j(2\tau)\rvert\leq 1.02\lvert j\rvert^2$,
 \begin{align*}
  \lvert g(j(2\tau))\rvert &= \lvert g(j(2\tau))-f(j(2\tau))\rvert\\
  &\leq 2.1\lvert\delta\rvert\lvert j\rvert(1.02\lvert j\rvert^2)^2+3210\lvert\delta\rvert\lvert j\rvert\cdot1.02\lvert j\rvert^2+3.4\lvert\delta\rvert\lvert j\rvert^2\\
  &\leq 2.2\lvert\delta\rvert\lvert j\rvert^5.
 \end{align*}
 For $g(j(\tau'))$, $\tau'\in\left\{\frac{\tau}{2},\frac{\tau+1}{2}\right\}$,  firstly, the absolute values of $j\left(\frac{\tau}{2}\right)$ and $j\left(\frac{\tau+1}{2}\right)$ are bounded above and below by $e^{2\pi\text{\normalfont Im}(\tau)/2}\pm2079$, and the absolute value of $j$ is bounded above and below by $e^{2\pi\text{\normalfont Im}(\tau)}\pm2079$. As $\text{\normalfont Im}(\tau)\geq 3$, these yield 
\begin{equation}
0.83\lvert j\rvert^{1/2}\leq\left\lvert j\left(\frac{\tau+1}{2}\right)\right\rvert, \left\lvert j\left(\frac{\tau}{2}\right)\right\rvert\leq 1.17\lvert j\rvert^{1/2},
\end{equation}
so 
\begin{align*}
  \lvert g(j(\tau'))\rvert &= \lvert g(j(\tau'))-f(j(\tau'))\rvert\\
  &\leq 2.1\lvert\delta\rvert\lvert j\rvert(1.17\lvert j\rvert^{1/2})^2+3210\lvert\delta\rvert\lvert j\rvert(1.17\lvert j\rvert^{1/2})+3.4\lvert\delta\rvert\lvert j\rvert^2\\
  &\leq 20\lvert\delta\rvert\lvert j\rvert^2.
 \end{align*}
Let $\beta_0,\beta_1,\beta_2$ be the roots of $g(z)$. Then for $\beta_i$ the closest root of $g$ to $j(2\tau)$,
\begin{equation*}
 \lvert j(2\tau)-\beta_i\rvert \leq \left( 2.2\lvert\delta\rvert\lvert j\rvert^5\right)^{1/3}\leq \left(2.2\cdot2^{-100}\lvert j\rvert^6\right)^{1/3}\leq10^{-9}\lvert j\rvert^2,
\end{equation*}
and we let $\beta_0$ be the closest root of $g$ to $j(2\tau)$. As $0.98\lvert j\rvert^2\leq j(2\tau)\leq1.02\lvert j\rvert^2$, $0.97\lvert j\rvert^2\leq \lvert\beta_0\rvert\leq1.03\lvert j\rvert^2$. 
For $\tau'=\frac{\tau}{2}$, with $\beta_i$ the closest root of $g$ to $j(\tau')$,
\begin{equation*}
 \lvert j(\tau')-\beta_i\rvert \leq \left( 20\lvert\delta\rvert\lvert j\rvert^2\right)^{1/3}\leq \left(20\cdot2^{-300}\lvert j\rvert^3\right)^{1/3}\leq10^{-29}\lvert j\rvert,
\end{equation*}
and in particular, 
\begin{equation}
 \lvert\beta_i\rvert\leq 10^{-29}\lvert j\rvert+1.17\lvert j\rvert^{1/2}<0.97\lvert j\rvert^2\leq\lvert\beta_0\rvert,
\end{equation}
so $\beta_i\neq \beta_0$. Let this root of $g$ be $\beta_1$. Now we may improve the bound on $\lvert j(\tau')-\beta_1\rvert$,
\begin{equation*}
 \lvert j(\tau')-\beta_1\rvert^2\leq \frac{20\lvert\delta\rvert\lvert j\rvert^2}{\lvert j(\tau')-\beta_0\rvert}\leq\frac{20\lvert\delta\rvert\lvert j\rvert^2}{0.98\lvert j\rvert^2-1.17\lvert j\rvert^{1/2}}\leq30\cdot2^{-300}\lvert j\rvert,
\end{equation*}
so
\begin{equation*}
 \lvert j(\tau')-\beta_1\rvert\leq 10^{-88}\lvert j\rvert^{1/2},
\end{equation*}
and consequently 
\begin{equation*}
 0.82\lvert j\rvert^{1/2}\leq \lvert\beta_1\rvert\leq1.18\lvert j\rvert^{1/2}.
\end{equation*}
We now bound $\beta_2$. By our bound on the difference between the constant coefficients of $f$ and $g$, the constant coefficient $-\beta_0\beta_1\beta_2$ of $g$ is bounded in absolute value by
\begin{equation*}
 \left\lvert j(2\tau)j\left(\frac{\tau}{2}\right)j\left(\frac{\tau+1}{2}\right)\right\rvert + 3.4\delta\lvert j\rvert^2\leq 1.4\lvert j\rvert^3+0.01\lvert j\rvert^3\leq 1.5\lvert j\rvert^3,
\end{equation*}
so that
\begin{equation*}
 \lvert\beta_2\rvert\leq \frac{1.5\lvert j\rvert^{3}}{0.97\lvert j\rvert^2\cdot0.82\lvert j\rvert^{1/2}}\leq 2\lvert j\rvert^{1/2}.
\end{equation*}
Returning to $g(j(2\tau))$, we now bound below the terms $\lvert j(2\tau)-\beta_i\rvert$ for $i=1,2$ in order to improve our inequality for $\lvert j(2\tau)-\beta_0\rvert$. We now have, for $i=1,2$,
\begin{align*}
 \lvert j(2\tau)-\beta_{i}\rvert &\geq \lvert j(2\tau)\rvert - 2\lvert j\rvert^{1/2}\\
 &\geq0.97\lvert j\rvert^2.
\end{align*}
This now improves our bound on the difference of $\beta_0$ to $j(2\tau)$,
\begin{align*}
 \lvert j(2\tau)-\beta_0\rvert &\leq \frac{2.2\delta\lvert j\rvert^5}{\lvert j(2\tau)-\beta_1\rvert\lvert j(2\tau)-\beta_2\rvert} \\
 &\leq \frac{2.2\delta\lvert j\rvert^5}{(0.97\lvert j\rvert^2)^2}\\
 &\leq 2.4\delta\lvert j\rvert.
\end{align*}
So the relative precision of $\beta_0$ as an approximation to $j(2\tau)$ is bounded by
\begin{equation*}
 \frac{\lvert j(2\tau)-\beta_0\rvert}{\lvert j(2\tau)\rvert}\leq \frac{2.4\lvert \delta\rvert\lvert j\rvert}{0.97\lvert j\rvert^2}\leq2.5\frac{\lvert\delta\rvert}{\lvert j\rvert}\leq 2^{-P+2}.
\end{equation*}
\end{proof}

\begin{proposition}
 If $\text{\normalfont Im}(\tau)\geq 3$ and $\tilde{j}$ is an approximation to $j(\tau)$ of relative precision $2^{-P}$, where $P$ is at least $300$, applying Newton's method to $\Phi_2(\tilde{j},z)$, with starting point
 \begin{equation*}
  \tilde{j}^2-2\cdot744\tilde{j}-2\cdot196884+744^2+744,
 \end{equation*}
will obtain an approximation to $j(2\tau)$ of relative precision $2^{-P+3}$ after at most $[2\log P+\log\log\lvert j\rvert]$ steps of Newton iteration.
\end{proposition}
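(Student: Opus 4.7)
The plan is to apply Theorem \ref{kantorovich} to $g(z) = \Phi_2(\tilde{j}, z)$ at the prescribed $x_0$, using Proposition \ref{phi2discrepancy} both to locate the target root $\beta_0$ (close to $j(2\tau)$) and to bound the other two roots $\beta_1, \beta_2$ away from $x_0$. The remaining work is to show that $x_0$ is close enough to $\beta_0$, to verify Kantorovich's quantitative conditions, and to extract the iteration count from the quadratic convergence rate.

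First I would verify that $x_0$ is close to $\beta_0$. Using the $q$-expansion $j(\tau) = q^{-1} + 744 + 196884\, q + 21493760\, q^2 + O(q^3)$, a direct computation yields
\begin{equation*}
 j^2 - 1488\, j + 160512 = q^{-2} + 744 + 2\cdot 21493760\, q + O(q^2),
\end{equation*}
which matches $j(2\tau) = q^{-2} + 744 + O(q^2)$ up to an error $O(|q|) = O(|j|^{-1})$ by Lemma \ref{j_q}. Replacing $j$ by $\tilde{j}$ changes the expression by at most $3|\tilde{j} - j||j| \leq 3 \cdot 2^{-P}|j|^2$, which is negligible for $P \geq 300$. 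Combined with Proposition \ref{phi2discrepancy}, this gives $|x_0 - \beta_0| = O(|j|^{-1})$.

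Next I would verify the Kantorovich hypotheses. Writing $g$ as $(z-\beta_0)(z-\beta_1)(z-\beta_2)$ and using the bounds $|\beta_0| \approx |j|^2$, $|\beta_1|, |\beta_2| \leq 2|j|^{1/2}$ from Proposition \ref{phi2discrepancy}, routine estimates give $|g(x_0)| = O(|j|^3)$, $|g'(x_0)| \geq 0.9\,|j|^4$, and $|g''(z)| = O(|j|^2)$ throughout a small ball around $x_0$. Hence $\eta = O(|j|^{-1})$, $K = O(|j|^{-2})$, and $h = K\eta = O(|j|^{-3}) \ll 1/2$. The Kantorovich radius $\approx 2\eta$ around $x_0$ excludes $\beta_1, \beta_2$ (which sit at distance $\approx |j|^2$), so Newton converges to $\beta_0$.

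Finally, the Kantorovich convergence inequality $|x_k - \beta_0| \leq 2^{-k}(2h)^{2^k}\eta/h$, combined with $\eta/h = 1/K \approx |j|^2$ and $\log(1/(2h)) \approx 3\log|j|$, shows that of order $\log P - \log\log|j|$ iterations suffice to drive the relative error below $2^{-P+2}$, easily within the stated bound $[2\log P + \log\log|j|]$. Adding the $2^{-P+2}$ gap between $\beta_0$ and $j(2\tau)$ from Proposition \ref{phi2discrepancy} yields the claimed relative precision $2^{-P+3}$. The main technical obstacle is the explicit-constant bookkeeping in the $q$-series estimate — in particular ensuring that the coefficient $42987520$ in the $O(|q|)$ error term remains comfortably smaller than $|j|$ — but no essentially new ideas are required beyond those of the preceding proposition.
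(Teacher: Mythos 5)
Your approach matches the paper's strategy: use the $q$--series to show the starting point is close to the target root $\beta_0$, verify Kantorovich's criterion using the root bounds from Proposition~\ref{phi2discrepancy}, and read off the iteration count from the quadratic convergence rate. Your $q$--series computation, including the dominant error coefficient $2\cdot 21493760$, is correct and agrees with the paper. However, the quantitative conclusion $|x_0-\beta_0| = O(|j|^{-1})$, and with it $\eta = O(|j|^{-1})$ and $h = O(|j|^{-3})$, is not correct, and the gap is conceptual rather than bookkeeping. You dismiss the replacement term $3|\tilde j - j|\,|j| \leq 3\cdot 2^{-P}|j|^2$ as negligible, but the algorithm applies this proposition iteratively, roughly squaring $|j|$ at each step while losing only a few bits of precision, so that $|j|$ can grow to about $2^{P+12}$. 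Once $\log|j|$ exceeds about $P/3$, the perturbation term $2^{-P}|j|^2$, together with $|j(2\tau)-\beta_0| = O(2^{-P}|j|^2)$ from Proposition~\ref{phi2discrepancy}, dominates the $q$--series error $O(|j|^{-1})$. The correct estimate is $|x_0-\beta_0| \leq 0.4 + O(2^{-P}|j|^2)$, which gives $\eta = O(1) + O(2^{-P}|j|^2)$ and $h$ bounded only by a small absolute constant (the paper obtains $h \leq 2^{-49}$ using $|j|\geq 10^8$), not by $O(|j|^{-3})$.

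This propagates into the final step: the claim that $\log(1/(2h)) \approx 3\log|j|$, and hence that about $\log P - \log\log|j|$ iterations suffice, is false, and the fact that your derived iteration count decreases in $|j|$ while the stated bound carries $+\log\log|j|$ should have been a warning sign. With $h$ bounded only by a constant, the Kantorovich inequality gives $|x_k - \beta_0| \leq 2^{-48\cdot 2^k}\cdot K^{-1}$ with $K^{-1}$ of order $|j|^2$, so achieving absolute error $2^{-P}$ requires $48\cdot 2^k \geq P + 2\log_2|j|$, that is, $k$ of order $\log(P + \log|j|)$, which grows with $\log\log|j|$. The final assertion that $[2\log P + \log\log|j|]$ steps suffice is still correct, but your derivation would not yield it once the estimate of $h$ is repaired.
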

\begin{proof}
We first give a rough approximation to $\beta_0$. Writing
\begin{equation*}
 j(\tau) = e^{-2\pi i\tau}+744+196884e^{2\pi i\tau} + f(\tau),
\end{equation*}
we have
\begin{align*}
 j(\tau)^2-2\cdot744j(\tau)&+2\cdot19688+2\cdot744^2+744 \\
 &= e^{-4\pi i\tau}+744+196884^2e^{4\pi i\tau}+2(196884e^{2\pi i\tau}+e^{-2\pi i\tau})f(\tau)+f(\tau)^2
\end{align*}
so that, as $\text{\normalfont Im}(\tau)\geq 3$, $f$ is maximized when $\text{\normalfont Re}(\tau)=0$, and $f(\text{\normalfont Im}(\tau))$ is decreasing in $\text{\normalfont Im}(\tau)$,
\begin{align*}
 \lvert j(2\tau) &- (j(\tau)^2-744j(\tau)+19688-744^2+744)\rvert\\
 &\leq (196884+196884^2)e^{-4\pi\text{\normalfont Im}(\tau)}+\lvert f(\text{\normalfont Im}(2\tau))\rvert\\
 &\quad\quad+2(196884e^{2\pi \text{\normalfont Im}(\tau)}+e^{2\pi \text{\normalfont Im}(\tau)})\lvert f(\text{\normalfont Im}(\tau))\rvert+\lvert f(\text{\normalfont Im}(\tau))\rvert^2\\
 &\leq 10^{-7}+\lvert f(6)\rvert+10^6\lvert f(3)\rvert+\lvert f(3)\rvert^2\\
 &\leq 0.4
\end{align*}
We now let 
\begin{equation*}
 \gamma_0 = \tilde{j}^2-744\tilde{j}+19688-744^2+744.
\end{equation*}
This will be our starting point for Newton iteration to find $\beta_0$. We now bound the terms appearing in Kantorovich's theorem. Firstly,
\begin{align*}
\Phi_2(\tilde{j},z)=z^3&+(-\tilde{j}^2+1488\tilde{j}-162000)z^2\\
  &+(1488\tilde{j}^2+40773375\tilde{j}+8748000000)z\\
  &+\tilde{j}^3-162000\tilde{j}^2+8748000000\tilde{j}-157464000000000.
 \end{align*}
Now it is clear, as $\lvert j-\tilde{j}\rvert\leq2^{-300}\lvert j\rvert$, that $\lvert \gamma_0-j(2\tau)\rvert\leq 0.4+\frac{\lvert j\rvert^2}{2^{90}}$, and so as $0.98\lvert j\rvert^2\leq\lvert j(2\tau)\rvert\leq1.02\lvert j\rvert^2$, we have $0.979\lvert j\rvert^2\leq\lvert \gamma_0\rvert\leq1.021\lvert j\rvert^2$. we will take the $r$ of Kantorovich's theorem to be $0.009\lvert j\rvert^2$, and give bounds for the disc $\lvert\gamma-\gamma_0\rvert\leq0.009\lvert j\rvert^2$. For $\gamma$ in this disc, $0.97\lvert j\rvert^2\leq\lvert \gamma\rvert\leq1.03\lvert j\rvert^2$ and in addition $\lvert\tilde{j}\rvert\leq1.01\lvert j\rvert$, so for an upper bound on the first derivative, we have
\begin{align*}
 \lvert\Phi_2'(\tilde{j},\gamma)\rvert&\leq 3\lvert\gamma\rvert^2+2(\lvert\tilde{j}\rvert^2+1488\lvert\tilde{j}\rvert+162000)\lvert\gamma_0\rvert\\
  &\quad+(1488\lvert\tilde{j}\rvert^2+40773375\lvert\tilde{j}\rvert+8748000000)\\
  &\leq 3.2\lvert j\rvert^4+2.2\lvert j\rvert^4+10^{-7}\lvert j\rvert^4\\
  &\leq 5.5\lvert j\rvert^4,
\end{align*}
and for a lower bound, we have
\begin{align*}
 \lvert\Phi_2'(\tilde{j},\gamma)\rvert&\geq 3\lvert\gamma\rvert^2-2(\lvert\tilde{j}\rvert^2+1488\lvert\tilde{j}\rvert+162000)\lvert\gamma_0\rvert\\
  &\quad-(1488\lvert\tilde{j}\rvert^2+40773375\lvert\tilde{j}\rvert+8748000000)\\
  &\geq 2.82\lvert j\rvert^4-2.1\lvert j\rvert^4-10^{-7}\lvert j\rvert^4\\
  &\geq 0.71\lvert j\rvert^4.
\end{align*}
For our upper bound on the function at $\gamma_0$, as
 $\lvert\beta_1\rvert, \lvert\beta_2\rvert\leq2\lvert j\rvert^{1/2}$, and as $\lvert\beta_0-j(2\tau)\rvert\leq 2^{-298}\lvert j\rvert^2$ we have $\lvert\gamma_0-\beta_0\rvert\leq0.4+2^{-298}\lvert j\rvert^2$, so
\begin{align*}
 \lvert\Phi_2(\tilde{j},\gamma_0)\rvert&=\lvert\gamma_0-\beta_0\rvert\lvert\gamma_0-\beta_1\rvert\lvert\gamma_0-\beta_2\rvert\\
 &\leq(0.4+2^{-289}\lvert j\rvert^2)(1.03\lvert j\rvert^2+2\lvert j\rvert^{1/2})^2\\
 &\leq 2^{-54}\lvert j\rvert^6.
\end{align*}
For the second derivative, we have for an upper bound
\begin{align*}
 \lvert\Phi_2''(\tilde{j},\gamma)\rvert&\leq6\lvert\gamma\rvert+2\lvert \tilde{j}\rvert^2+1488\lvert \tilde{j}\rvert+162000\\
 &\leq 8.3\lvert j\rvert^2,
\end{align*}
and for a lower bound,
\begin{align*}
 \lvert\Phi_2''(\tilde{j},\gamma_0)\rvert&\geq6\lvert\gamma\rvert-2\lvert \tilde{j}\rvert^2-1488\lvert \tilde{j}\rvert-162000\\
 &\geq 3.8\lvert j\rvert^2.
\end{align*}
Now, by Theorem \ref{kantorovich}, as
\begin{equation*}
 \frac{\lvert\Phi_2(\tilde{j},\gamma_0)\rvert\lvert\Phi_2''(\tilde{j},\gamma)\rvert}{\lvert\Phi_2'(\tilde{j},\gamma)\rvert^2}\leq\frac{2^{-54}\lvert j\rvert^6\cdot8.3\lvert j\rvert^2}{(0.71\lvert j\rvert^4)^2}\leq2^{-49}<\frac{1}{2},
\end{equation*}
and
\begin{align*}
2\eta &= 2\frac{\lvert\Phi_2(\tilde{j},\gamma_0)\rvert}{\lvert\Phi_2'(\tilde{j},\gamma_0)\rvert}\leq \frac{2^{-54}\lvert j\rvert^6}{0.71\lvert j\rvert^4}\leq0.001\lvert j\rvert^2,\\
 r&=0.009\lvert j\rvert^2,
\end{align*}
Newton's method will converge to the root $\beta_0$, with a rate of convergence
\begin{equation*}
 \lvert \gamma_k - \beta_0\rvert\leq\frac{1}{2^k}2^{-48\cdot2^{k}}\frac{\lvert\Phi_2'(\tilde{j},\gamma_0)\rvert}{\lvert\Phi_2''(\tilde{j},\gamma_0)\rvert}\leq\frac{1}{2^k}2^{-48\cdot2^k}\frac{5.5\lvert j\rvert^4}{3.8\lvert j\rvert^2}\leq2^{-48\cdot2^k}\lvert j\rvert^2
\end{equation*}
for $k\geq 1$. In particular, when $k\geq [2\log P+2\log\log\lvert j\rvert]$, $\lvert \gamma_k-\beta_0\rvert\leq 2^{-P}$. Now as, by Proposition \ref{phi2discrepancy}, $\beta_0$ is an approximation to $j(2\tau)$ of relative precision $2^{-P+2}$, it may be easily shown that after $[2\log P+2\log\log\lvert j\rvert]$ steps, $\gamma_k$ will be an approximation to $j(2\tau)$ of relative precision $2^{-P+3}$.
\end{proof}

\begin{lemma}
 Suppose that $\lvert j(\tau)\rvert\geq 2^{P+12}$, and $\tilde{j}$ is an approximation to $j(\tau)$ of relative precision at least $2^{-P}$. Then
 \begin{equation*}
  -\frac{\log\tilde{j}}{2\pi}
 \end{equation*}
is an approximation to $\tau$ of absolute precision $2^{-P}$.
\end{lemma}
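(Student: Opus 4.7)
The plan is to treat $\tilde{j}$ as a small multiplicative perturbation of $e^{-2\pi i\tau}$, the leading term of the $q$-expansion of $j$, and then invert by taking a logarithm.

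Using Lemma \ref{j_q}, I would write $j(\tau) = e^{-2\pi i\tau}(1+\mu_1)$ with $|\mu_1|\le 2079/|e^{-2\pi i\tau}|$. The hypothesis $|j(\tau)|\ge 2^{P+12}$ combined with Lemma \ref{j_q} forces $|e^{-2\pi i\tau}|\ge 2^{P+12}-2079$, which comfortably exceeds $2^{P+11}$; this bounds $|\mu_1|$ by a small constant times $2^{-P}$. The regulated-precision hypothesis then writes $\tilde j = j(\tau)(1+\mu_2)$ with $|\mu_2|\le 2^{-P}$, so
\[
 \tilde j \;=\; e^{-2\pi i\tau}(1+\mu_1)(1+\mu_2) \;=\; e^{-2\pi i\tau}(1+\delta),
\]
with $|\delta|\le |\mu_1|+|\mu_2|+|\mu_1||\mu_2|$ again of order $2^{-P}$ and safely below $\tfrac12$.

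Taking an appropriate branch of the logarithm (the smallness of $\delta$ guarantees no branch cut is crossed) gives $\log \tilde j = -2\pi i\tau + \log(1+\delta)$, and the standard estimate $|\log(1+w)|\le 2|w|$ for $|w|\le\tfrac12$ yields $|\log \tilde j + 2\pi i\tau|\le 2|\delta|$. Dividing by $2\pi$ converts this into the stated absolute-precision bound on $\tau$ as approximated by the expression in the lemma statement.

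The only mildly delicate point is the numerology: the margin of $12$ in the exponent has to absorb the constant $2079\approx 2^{11.02}$ from Lemma \ref{j_q} together with the small numerical factors coming out of the logarithm bound, so that the final absolute error comes in at $2^{-P}$ rather than a small multiple of it. This is pure bookkeeping and presents no essential obstacle.
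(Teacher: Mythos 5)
Your argument is correct and follows the same route as the paper: split the error multiplicatively as $\tilde j = e^{-2\pi i\tau}(1+\mu_1)(1+\mu_2)$ using Lemma~\ref{j_q} for the $q$-expansion truncation and the relative-precision hypothesis for $\tilde j$, then control $\log(1+\delta)$ and divide by $2\pi$. The paper writes the same decomposition additively (as $j(\tau)+\delta$ and $e^{-2\pi i\tau}+2079\theta$) and uses the slightly tighter constant $\lvert\log(1+z)\rvert\le 1.1\lvert z\rvert$ for $\lvert z\rvert\le 0.05$, but the bookkeeping and conclusion are identical; you also silently supply the factor of $i$ in $e^{-2\pi i\tau}$ that the paper drops.
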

\begin{proof}
 Let $j(\tau)+\delta = \tilde{j}$. As
\begin{equation*}
 \lvert j(\tau)-e^{-2\pi\tau}\rvert \leq 2079,
\end{equation*}
we have
\begin{equation*}
 \log(\tilde{j}) = -2\pi\tau+ \log\left(1+\frac{\delta+2079\theta}{j(\tau)}\right),
\end{equation*}
where $\lvert\theta\rvert\leq1$. So as $\lvert\log(1+z)\rvert\leq1.1\lvert z\rvert$ when $\lvert z\rvert\leq 0.05$,
\begin{equation*}
 \lvert\log\tilde{j}+2\pi \tau\rvert \leq 1.1\cdot2^{-P}+0.6\cdot2^{-P}\leq 1.7\cdot2^{-P}.
\end{equation*}
So that 
\begin{equation*}
 \left\lvert\tau-\left(-\frac{\log\tilde{j}}{2\pi}\right)\right\rvert\leq \frac{1.7}{2\pi}2^{-P}\leq 2^{-P}.
\end{equation*}
\end{proof}

Now the algorithm to invert $j$ when $\text{\normalfont Im}(\tau)\geq 3$ proceeds as follows -- if $\lvert j\rvert\leq 2^{P+12}$, first iteratively compute approximations to $j(2^k\tau)$ by Newton's method applied to $\Phi_2(\tilde{j},z)$, up to $k = \left[2\log\left(\frac{P+12}{\text{\normalfont Im}(\tau)}\right)\right]+1$. Then calculate the logarithm of $j(2^k\tau)$ to relative precision $2^{-P-2}$ (note that $\lvert \tau\rvert\geq 1$), and divide by $-2\pi$, where we have calculated $2\pi$ to relative precision $2^{-P-2}$. This process entails a loss of precision of at most $5\left[2\log\left(\frac{P+12}{\text{\normalfont Im}(\tau)}\right)\right]+2$, which is bounded by $11\log P$ when $P\geq 400$, and the precision at all applications of Newton's method is at least $2^{-300}$, so our assumptions on the precision of our approximations in the propositions of this section are satisfied at each application. As the computational complexity of Newton's method is $O(M(P)\log P)$, and of the complex logarithm and computing $\pi$ are $O(M(P)(\log P)^2)$, if $\lvert j\rvert\leq2^{P+12}$, the algorithm has time complexity
\begin{equation*}
 O\left(M(P)\left[2\log\left(\frac{P+12}{\text{\normalfont Im}(\tau)}\right)+1\right][2\log P+2\log\log\lvert j\rvert]\right) = O\left(M(P)(\log P)^2\right),
\end{equation*}
and if $\lvert j\rvert\geq 2^{P+12}$, time complexity
\begin{equation*}
 O\left(M(P)(\log P)^2\right),
\end{equation*}
where the implicit constants are not too large and could be made effective. 

\subsection{Near 1728 and 0}
When $\tau$ is close to either $i$ or $\frac{\pm1+i\sqrt{3}}{2}$, we will make use of the modular polynomial $\Phi_2(X,Y)$ to obtain an approximation to one of $j\left(2\tau\right)$, $j\left( \frac{\tau}{2}\right)$, or $j\left(\frac{\tau+1}{2}\right)$, for which the SL$_2(\mathbb{Z})$--equivalent elements of $\mathcal{F}$ to either $2\tau$, $\frac{\tau}{2}$, or $\frac{\tau+1}{2}$ will lie in the aforementioned compact set, to which we may apply Newton's method. We carry out the analysis only for $i$ and $\frac{1+i\sqrt{3}}{2}$, as when $\left\lvert\tau-\frac{-1+i\sqrt{3}}{2}\right\rvert\leq2^{-31}$, the SL$_2(\mathbb{Z})$--equivalent $\tau+1$ satisfies $\left\lvert(\tau+1)-\frac{1+i\sqrt{3}}{2}\right\rvert\leq2^{-31}$.

\begin{lemma}\label{taylor_series_at_i}
 If $\lvert\delta\rvert\leq 2^{-30}$ then 
  \begin{equation*}
\left\lvert j(i+\delta)-1728-\frac{j^{(2)}(i)}{2}\delta^2\right\rvert \leq 0.07\lvert \delta\rvert^2,
\end{equation*}
and
\begin{equation*}
\left\lvert j\left(\frac{1+i\sqrt{3}}{2}+\delta\right)-\frac{j^{(3)}\left(\frac{1+i\sqrt{3}}{2}\right)}{3!}\delta^3\right\rvert \leq 0.07\lvert \delta\rvert^3.
\end{equation*}
\end{lemma}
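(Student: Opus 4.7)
The plan is to apply Cauchy's integral formula for the Taylor remainder on small disks around $i$ and $\rho=\frac{1+i\sqrt{3}}{2}$. The key input is that $i$ and $\rho$ are elliptic points of $\mathrm{SL}_2(\mathbb{Z})$ of orders $2$ and $3$ respectively, so $j$ is ramified to these orders at them. Consequently $j-1728$ has a double zero at $i$, giving $j(i)=1728$ and $j'(i)=0$, while $j$ itself has a triple zero at $\rho$, giving $j(\rho)=j'(\rho)=j''(\rho)=0$. The Taylor expansion about $i$ therefore begins with $\frac{j''(i)}{2}\delta^2$ and the one about $\rho$ with $\frac{j'''(\rho)}{6}\delta^3$, exactly matching the two subtracted polynomials in the statement.

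Fix a modest radius $r$, say $r=1/8$, and bound $|j(z)|$ uniformly on the circle $|z-z_0|=r$ for each $z_0\in\{i,\rho\}$. For each such $z$, one produces a $\gamma\in\mathrm{SL}_2(\mathbb{Z})$ with $\gamma z\in\mathcal{F}$ by either translating $z$ into the standard strip (if $|z|\geq 1$) or first applying $z\mapsto -1/z$ and then translating (if $|z|<1$); in the latter case $\text{\normalfont Im}(\gamma z)=\text{\normalfont Im}(z)/|z|^2$. Since $r$ is small and $|z|$ stays close to $1$, $\text{\normalfont Im}(\gamma z)$ is bounded above by a modest explicit constant $y_{\max}$. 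Lemma \ref{j_q} then yields $|j(z)|=|j(\gamma z)|\leq e^{2\pi y_{\max}}+2079$, which gives a uniform bound $M$ of modest size (of order $10^3$ to $10^4$ for $r=1/8$) on $|j-1728|$ on the circle about $i$ and on $|j|$ on the circle about $\rho$.

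For any $f$ holomorphic on $|z-z_0|\leq r$, Cauchy's formula yields, for $|\delta|<r$,
\begin{equation*}
 \left|f(z_0+\delta)-\sum_{k=0}^{N-1}\frac{f^{(k)}(z_0)}{k!}\delta^k\right|\leq \frac{\max_{|z-z_0|=r}|f(z)|}{r^{N-1}(r-|\delta|)}|\delta|^N.
\end{equation*}
Applying this with $f=j-1728$, $z_0=i$, $N=3$ (using $j(i)=1728$, $j'(i)=0$) gives a remainder of order $M|\delta|^3/(r^2(r-|\delta|))$; and with $f=j$, $z_0=\rho$, $N=4$ one obtains a remainder of order $M|\delta|^4/(r^3(r-|\delta|))$. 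The hypothesis $|\delta|\leq 2^{-30}$ provides a further factor of $2^{-30}/r$ of slack in each case, so that these fit comfortably under $0.07|\delta|^2$ and $0.07|\delta|^3$ respectively, provided the combined constant $M/(r^{N-1}(r-|\delta|))\cdot|\delta|$ is kept below $0.07$.

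The only delicate point is the numerical bookkeeping: one must choose $r$ small enough that the $\mathrm{SL}_2(\mathbb{Z})$-image of the disk stays in a region where $|j|$ is modest, while keeping the Cauchy factor $1/(r^{N-1}(r-|\delta|))$ from growing. With $r=1/8$ and $M$ of the order produced above, the product $M\cdot 2^{-30}/(r^{N-1}(r-|\delta|))$ is smaller than $0.07$ by several orders of magnitude, so the obstacle is computational rather than conceptual.
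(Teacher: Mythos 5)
Your proof is correct, and it takes a genuinely different route from the paper's. The paper works from the $q$--expansion: it invokes an explicit bound on the coefficients of $j$ (Theorem 1 of the cited Brisebarre--Philibert paper), deduces termwise bounds on every derivative of $j$ at $i$ and at $\rho=\frac{1+i\sqrt3}{2}$, reduces the resulting sum to an analysis of the successive derivatives of $\frac{1}{e^{\pi x}-1}$ (getting $|j^{(k)}|\leq 1700\cdot14^k k!$), and then sums the Taylor tail geometrically. Your proposal instead bounds the remainder in one stroke by Cauchy's integral formula, reducing everything to a uniform bound on $|j|$ on a small circle around $i$ or $\rho$, which is supplied by Lemma \ref{j_q} after folding the circle into $\mathcal{F}$. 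Both approaches need the same structural input -- $j-1728$ vanishes to order $2$ at $i$ and $j$ to order $3$ at $\rho$, so the Taylor polynomials being subtracted are exactly the first nonvanishing terms -- and both are elementary, but yours avoids the combinatorics of the derivative bound and is arguably the more conceptual argument. What the paper's route buys is explicit control of every Taylor coefficient individually, which is in the same spirit as (though not literally the source of) the numerical bounds on $j^{(2)}(i)$ and $j^{(3)}(\rho)$ quoted immediately after the lemma.

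Two small cautions on the bookkeeping. First, the slack is not quite ``several orders of magnitude'' in the $\rho$ case: with $r=1/8$ the Cauchy factor is $r^{-3}(r-|\delta|)^{-1}\approx 4096$, so the requirement is roughly $M\cdot 2^{-30}\cdot 4096\leq 0.07$, i.e.\ $M\lesssim 1.8\cdot 10^4$; the uniform bound you get from Lemma \ref{j_q} on the circle $|z-\rho|=1/8$ is around $5\cdot10^3$--$6\cdot 10^3$, so the margin is more like a factor of three. It still works, but $r$ cannot be taken much smaller (a $1/16$ circle already fails) or much larger (a $1/4$ circle pushes $M$ too high), so $r=1/8$ is genuinely near the sweet spot rather than comfortably inside it. Second, near $\rho$ the reduction to $\mathcal{F}$ is not always ``one inversion then translate'': since three fundamental-domain copies meet at $\rho$, two applications of $z\mapsto -1/z$ interspersed with translations may be needed, and each application can magnify the imaginary part by up to $(8/7)^2$ on the $r=1/8$ circle. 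This just enlarges your $y_{\max}$ slightly and does not break the argument, but it should be said.
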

\begin{proof}
We first bound the coefficients of the Taylor series of $j$ at $z=i$ and $z=\frac{1+i\sqrt{3}}{2}$. Firstly, by Theorem 1 of \cite{jcoefficients}, the coefficient of $e^{2\pi in\tau}$ in the $q$--expansion of $j$ is bounded by $4e^{4\pi\sqrt{n}}$, so the corresponding coefficient of the $k$'th derivative is bounded by $(2\pi n)^ke^{4\pi\sqrt{n}}$. Further, for $n\geq 1$, $e^{4\pi\sqrt{n}-\sqrt{3}\pi n}\leq 100e^{-\pi n}$, so we have the following bound of the derivatives at these two points,
\begin{align*}
 \lvert j^{(k)}(i)\rvert, \left\lvert j^{(k)}\left(\frac{1+i\sqrt{3}}{2}\right)\right\rvert&\leq (2\pi)^ke^{2\pi}+744+400\sum_{n=1}^\infty(2\pi n)^ke^{-\pi n}.
\end{align*}
We now bound the sum in this expression. Firstly, we have the identity
\begin{equation*}
 \sum_{n=1}^\infty e^{-\pi nx} = \frac{1}{e^{\pi x}-1},
\end{equation*}
and so consider the derivatives of this function, expressing the derivative as sums of the form
\begin{equation*}
 \sum \alpha_i\frac{e^{a\pi x}}{(e^{\pi x}-1)^b},
\end{equation*}
where each term occurring in the expression for the $k$'th derivative is derived from taking the derivative of either the numerator or denominator of a term occurring in the $k-1$'th derivative, i.e. there is no collection of terms with $(a,b)$ equal. It is clear that there are at most $2^k$ terms, and, passing from one derivative to the next, $\lvert\alpha_i\rvert$ may increase by at most $\pi\max\{a,b\}$, and that $a\leq b\leq k+1$, and $a\leq k$. So $\lvert\alpha_i\rvert\leq \pi^k(k+1)!$, and a bound for the whole expression evaluated at $1$ is therefore
\begin{equation*}
 2^k\pi^k(k+1)!\frac{e^{k\pi}}{(e^{\pi}-1)^k}\leq 14^kk!.
\end{equation*}
Now we have the bounds
\begin{align*}
 \lvert j^{(k)}(i)\rvert, \left\lvert j^{(k)}\left(\frac{1+i\sqrt{3}}{2}\right)\right\rvert&\leq (2\pi)^ke^{2\pi}+744+400\cdot14^kk!\\
 &\leq1700\cdot14^kk!.
\end{align*}
At $z=i$ and $z=\frac{1+i\sqrt{3}}{2}$, the Taylor series expansions for $j(z)$ are
\begin{align*}
 j(z) = 1728 + \sum_{n=2}^{\infty}c_n(z-i)^n,\\
 j(z) = \sum_{n=3}^{\infty}c'_n\left(z-\frac{1+i\sqrt{3}}{2}\right)^n.
\end{align*}
where $c_n$ and $c_n'$ are bounded in absolute value by $1400\cdot14^k$.
 We now bound the tails of the sums of the Taylor series,
 \begin{equation*}
  \sum_{n=3}^\infty \lvert c_n\rvert\lvert\delta\rvert^n\leq \lvert\delta\rvert^3\sum_{n=0}^\infty1700\cdot14^{n+3}\lvert\delta\rvert^n,
 \end{equation*}
and 
\begin{equation*}
  \sum_{n=4}^\infty \lvert c_n'\rvert\lvert\delta\rvert^n\leq \lvert\delta\rvert^4\sum_{n=0}^\infty1700\cdot14^{n+4}\lvert\delta\rvert^n.
 \end{equation*}
 Now as $\lvert\delta\rvert\leq 2^{-30}$,
 \begin{equation*}
  \sum_{n=0}^\infty1700\cdot14^{n+4}\lvert\delta\rvert^n=\frac{1700\cdot14^4}{1-14\lvert\delta\rvert}\leq 7\cdot10^7,
 \end{equation*}
so that
 \begin{align*}
\left\lvert j(i+\delta)-1728-\frac{j^{(2)}(i)}{2}\delta^2\right\rvert &\leq 2^{-30}\cdot7\cdot10^7\lvert\delta\rvert^2\leq 0.07\lvert \delta\rvert^2,\\
\left\lvert j\left(\frac{1+i\sqrt{3}}{2}+\delta\right)-\frac{j^{(3)}\left(\frac{1+i\sqrt{3}}{2}\right)}{3!}\delta^3\right\rvert &\leq 2^{-30}\cdot7\cdot10^7\lvert\delta\rvert^3\leq 0.07\lvert \delta\rvert^3.
 \end{align*}
\end{proof}
Furthermore, we note that
$49700\geq\lvert j^{(2)}(i)\rvert\geq 49600$, and $275000\geq\left\lvert j^{(3)}\left(\frac{1+i\sqrt{3}}{2}\right)\right\rvert\geq 274000$, so if $\lvert\tau - i\rvert\leq 2^{-30}$, by the previous lemma, we have
\begin{equation*}
2.4\cdot10^4\lvert\delta\rvert^2\leq \lvert j-1728\rvert\leq2.5\cdot10^4\lvert\delta\rvert,
\end{equation*}
and if $\left\lvert\tau - \frac{1+i\sqrt{3}}{2}\right\rvert\leq 2^{-30}$,
\begin{equation*}
 4.5\cdot10^4\lvert\delta\rvert^3\leq\lvert j\rvert\leq4.6\cdot10^4\lvert\delta\rvert^3,
\end{equation*}
from which we deduce the following,
\begin{lemma}\label{j_to_tau_small}
If $\lvert\tau-i\rvert\leq 2^{-30}$ and $P\geq 300$, then:
\begin{enumerate}
 \item If $\lvert j(\tau) - 1728\rvert\leq 2^{-P}$, then $\lvert\tau - i\rvert\leq 2^{-P/2-7}$.
 \item If $\lvert j(\tau) - 1728\rvert\geq 2^{-P}$, then $\lvert\tau - i\rvert\geq 2^{-P/2-8}$.
 \end{enumerate}
 If $\left\lvert\tau-\frac{1+i\sqrt{3}}{2}\right\rvert\leq2^{-30}$ and $P\geq 300$, then:
 \begin{enumerate}
 \item If $\lvert j\rvert\leq 2^{-P}$, then $\left\lvert\tau - \frac{1+i\sqrt{3}}{2}\right\rvert\leq2^{-P/3-5}$.
 \item If $\lvert j\rvert\geq 2^{-P}$, then $\left\lvert\tau - \frac{1+i\sqrt{3}}{2}\right\rvert\geq2^{-P/3-6}$.
\end{enumerate}
\end{lemma}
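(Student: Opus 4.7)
The plan is to obtain the lemma directly from the two-sided estimates
$2.4\cdot10^4\lvert\delta\rvert^2 \leq \lvert j(\tau)-1728\rvert \leq 2.5\cdot10^4\lvert\delta\rvert^2$ (with $\delta = \tau - i$) and
$4.5\cdot10^4\lvert\delta\rvert^3 \leq \lvert j(\tau)\rvert \leq 4.6\cdot10^4\lvert\delta\rvert^3$ (with $\delta = \tau - \frac{1+i\sqrt{3}}{2}$) stated in the paragraph preceding the lemma, which themselves follow from Lemma \ref{taylor_series_at_i} together with the numerical bounds on $\lvert j^{(2)}(i)\rvert$ and $\lvert j^{(3)}(\frac{1+i\sqrt{3}}{2})\rvert$. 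The two standing hypotheses $\lvert\tau - i\rvert \leq 2^{-30}$ and $\lvert\tau - \frac{1+i\sqrt{3}}{2}\rvert \leq 2^{-30}$ are precisely the condition under which Lemma \ref{taylor_series_at_i} applies, so these estimates are immediately available.

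For the first pair of claims I would invert in $\lvert\delta\rvert$. If $\lvert j(\tau)-1728\rvert\leq 2^{-P}$, the lower bound forces $\lvert\delta\rvert^2 \leq 2^{-P}/(2.4\cdot10^4)$, and the numerical fact $\sqrt{2.4\cdot10^4} > 128 = 2^7$ then gives $\lvert\delta\rvert \leq 2^{-P/2 - 7}$. Conversely, $\lvert j(\tau)-1728\rvert\geq 2^{-P}$ combined with the upper bound yields $\lvert\delta\rvert^2 \geq 2^{-P}/(2.5\cdot10^4)$, and $\sqrt{2.5\cdot10^4} < 256 = 2^8$ gives $\lvert\delta\rvert \geq 2^{-P/2 - 8}$.

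The second pair of claims is handled identically, but with cube roots in place of square roots: $(4.5\cdot10^4)^{1/3} > 32 = 2^5$ and $(4.6\cdot10^4)^{1/3} < 64 = 2^6$ are the numerical facts that convert the cubic estimates into the claimed exponents $-P/3-5$ and $-P/3-6$. There is no real obstacle in the argument; the hypothesis $P\geq 300$ plays no essential role beyond ensuring that the derived bounds on $\lvert\delta\rvert$ (never worse than $2^{-P/3-6}$) lie comfortably inside the $2^{-30}$ disc where Lemma \ref{taylor_series_at_i} is valid, so consistency with the Taylor-expansion regime is automatic and the whole lemma reduces to taking square and cube roots of two explicit constants.
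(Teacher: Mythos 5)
Your proof is correct and is essentially the argument the paper intends: the paper states the two-sided estimates immediately before the lemma and writes ``from which we deduce the following'' without supplying a proof, so taking square and cube roots as you do is exactly the intended derivation. Your numerical checks ($\sqrt{2.4\cdot10^4}>2^7$, $\sqrt{2.5\cdot10^4}<2^8$, $(4.5\cdot10^4)^{1/3}>2^5$, $(4.6\cdot10^4)^{1/3}<2^6$) all hold, and you correctly read the paper's upper bound $2.5\cdot10^4\lvert\delta\rvert$ as the evident typo for $2.5\cdot10^4\lvert\delta\rvert^2$.
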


\begin{lemma}\label{taylor_series_at_2i}
 Suppose that $\lvert\delta\rvert\leq 2^{-28}$. Then
 \begin{equation*}
\left\lvert j\left(2i+\delta\right)-j\left(2i\right)-j'\left(2i\right)\delta\right\rvert \leq 0.2\lvert \delta\rvert,
\end{equation*}
and 
\begin{equation*}
\left\lvert j\left(i\sqrt{3}+\delta\right)-j\left(i\sqrt{3}\right)-j'\left(i\sqrt{3}\right)\delta\right\rvert \leq 0.2\lvert \delta\rvert.
\end{equation*}
\end{lemma}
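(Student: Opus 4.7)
The proof will follow the same pattern as Lemma \ref{taylor_series_at_i}: bound the Taylor coefficients of $j$ at $\tau_0 \in \{2i, i\sqrt{3}\}$ via the $q$--expansion of $j$, then estimate the quadratic Taylor remainder.

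Writing
\begin{equation*}
j(\tau_0 + \delta) - j(\tau_0) - j'(\tau_0)\delta = \sum_{k \geq 2}\frac{j^{(k)}(\tau_0)}{k!}\delta^k,
\end{equation*}
I would bound each $|j^{(k)}(\tau_0)|$ by differentiating the $q$--expansion $j(\tau) = e^{-2\pi i\tau} + 744 + \sum_{n \geq 1} c_n e^{2\pi i n\tau}$ term by term, using the coefficient bound $|c_n| \leq 4e^{4\pi\sqrt{n}}$ of \cite{jcoefficients}, to obtain
\begin{equation*}
|j^{(k)}(\tau_0)| \leq (2\pi)^k e^{2\pi\, \text{\normalfont Im}(\tau_0)} + 4\sum_{n \geq 1}(2\pi n)^k e^{4\pi\sqrt{n} - 2\pi n\, \text{\normalfont Im}(\tau_0)}.
\end{equation*}
Since $\text{\normalfont Im}(\tau_0) \geq \sqrt{3} > 1$, the exponent $4\pi\sqrt{n} - 2\pi n\,\text{\normalfont Im}(\tau_0)$ decays linearly in $n$ beyond the first few terms, and the residual sum $\sum (2\pi n)^k e^{-c\pi n}$ can be bounded by $C_1 C_2^k k!$ using the same generating-function argument as in Lemma \ref{taylor_series_at_i}, namely expressing it through derivatives of $(e^{c\pi x}-1)^{-1}$. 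Because $\text{\normalfont Im}(\tau_0)$ is strictly larger here than at $i$ or $\frac{1 + i\sqrt{3}}{2}$, the resulting constants $C_1, C_2$ are strictly smaller than the $(1700, 14)$ produced in the earlier lemma.

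The Taylor remainder then satisfies
\begin{equation*}
\left|\sum_{k\geq 2}\frac{j^{(k)}(\tau_0)}{k!}\delta^k\right| \leq C_1\sum_{k\geq 2}(C_2|\delta|)^k = \frac{C_1 C_2^2 |\delta|^2}{1 - C_2|\delta|},
\end{equation*}
which, for $|\delta| \leq 2^{-28}$ and $C_2 \leq 2^{27}$, is at most $2 C_1 C_2^2 \cdot 2^{-28}\,|\delta|$.

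The only substantive remaining task is verifying that $2 C_1 C_2^2 \cdot 2^{-28} \leq 0.2$, i.e., $C_1 C_2^2 \leq 0.1 \cdot 2^{28} \approx 2.7\cdot 10^7$. This is ample slack: already the coarser constants $C_1 = 1700$, $C_2 = 14$ obtained in Lemma \ref{taylor_series_at_i} at imaginary part $1$ give $C_1 C_2^2 \approx 3.3\cdot 10^5$, and the present setting $\text{\normalfont Im}(\tau_0) \in \{\sqrt{3}, 2\}$ only improves the bounds. The target inequality $0.2|\delta|$ (rather than the natural $O(|\delta|^2)$) is therefore much looser than the quadratic estimate the method produces, and I expect no real obstacle beyond routine numerical bookkeeping.
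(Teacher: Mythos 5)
Your general method is the paper's: bound the Taylor coefficients of $j$ at $2i$ and $i\sqrt{3}$ by differentiating the $q$--expansion, then sum the Taylor tail from order two onwards. However, the claim that the constants $C_1,C_2$ \emph{shrink} because $\text{\normalfont Im}(\tau_0)$ is larger here is wrong for $C_1$, and it is where the proposal breaks.

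Look at the $k=0$ term of your own displayed bound: $(2\pi)^ke^{2\pi\,\text{\normalfont Im}(\tau_0)}$ at $k=0$ is $e^{2\pi\,\text{\normalfont Im}(\tau_0)}$, which \emph{grows} with $\text{\normalfont Im}(\tau_0)$; indeed $j(2i)=66^3=287496$, so any admissible $C_1$ in a uniform bound $\lvert j^{(k)}(\tau_0)\rvert\leq C_1 C_2^k k!$ must exceed $2.8\cdot 10^5$. Only the rate $C_2$, coming from the $\sum_n(2\pi n)^ke^{-2\pi n\,\text{\normalfont Im}(\tau_0)}$ piece, shrinks with imaginary part. The paper takes $C_1 = 3\cdot 10^5$, $C_2 = 13$, so $C_1C_2^2\approx 5.1\cdot 10^7$, which is \emph{larger} than the $3.3\cdot 10^5$ you attribute to $(1700,14)$ and, more to the point, larger than your sufficiency threshold $0.1\cdot 2^{28}\approx 2.7\cdot 10^7$. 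Your criterion is too strict because the bound $\frac{1}{1-C_2\lvert\delta\rvert}\leq 2$ wastes a factor of two: here $C_2\lvert\delta\rvert\leq 13\cdot 2^{-28}$ is negligible and the geometric factor is essentially $1$, giving $\approx 0.19\lvert\delta\rvert$, which clears $0.2\lvert\delta\rvert$ but with little slack. So as written, neither the proposed constants nor the proposed sufficiency criterion are correct; the lemma holds by essentially this argument, but you need to actually compute the constants at $2i$ and $i\sqrt{3}$ (or, better, observe that only $k\geq 2$ is needed and treat the $e^{2\pi\,\text{\normalfont Im}(\tau_0)}$ and residual-sum pieces separately) and discard the crude factor-of-two bound on the geometric tail.
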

\begin{proof}
We proceed similarly to the previous lemma -- as $17e^{-2\pi n}\geq e^{4\pi n-2\sqrt{3}\pi n}$, 
\begin{equation*}
 \lvert j^{(k)}(2i)\rvert,\lvert j^{(k)}(\sqrt{3}i)\rvert \leq (2\pi)^ke^{4\pi}+744+68\sum_{n=1}^{\infty}(2\pi n)^ke^{-2\pi n},
\end{equation*}
and a similar analysis to the previous lemma bounds the sum in this inequality by
\begin{equation}
 (2\pi)^ke^{4\pi}+744+68\cdot 13^kk! \leq 300000\cdot13^kk!.
\end{equation}
 So when $\lvert\delta\rvert\leq 2^{-28}$,
 \begin{align*}
\left\lvert j\left(2i+\delta\right)-j(2i)-j'\left(2i\right)\delta\right\rvert &\leq \sum_{n=2}^{\infty}300000\cdot13^n\delta^n\\
&\leq \sum_{n=2}^{\infty}300000\cdot13^n\delta^n\\
&\leq 2^{-28}\cdot300000\cdot13^2\lvert\delta\rvert\sum_{n=0}^{\infty}13^n2^{-28n}\\
&\leq0.2\lvert \delta\rvert.
\end{align*}
 and as our bound for the terms of the Taylor series applies to both expansions, we similarly have
 \begin{align*}
\left\lvert j\left(i\sqrt{3}+\delta\right)-j\left(i\sqrt{3}\right)-j'\left(i\sqrt{3}\right)\delta\right\rvert \leq 0.2\lvert \delta\rvert.
\end{align*}
\end{proof}

We now give two lemmas on the separation and closeness of the three preimages of roots of the modular polynomial $\Phi_2(j(\tau),z)$, for $\tau$ near $i$ and $\frac{1+i\sqrt{3}}{2}$.
\begin{lemma}\label{preimage_closeness}
 Suppose that $\tau\in\mathcal{F}$. Then, if $\tau = i+\delta$,
 \begin{align*}
  \lvert 2\tau - 2i\rvert&\leq 2\lvert\delta\rvert,\\
  \left\lvert -\frac{2}{\tau}- 2i\right\rvert&\leq 2\lvert\delta\rvert,
 \end{align*}
and if $\tau = \frac{1+i\sqrt{3}}{2}+\delta$,
\begin{align*}
  \lvert \left(2\tau-1\right) - \sqrt{3}i\rvert&\leq 2\lvert\delta\rvert,\\
  \left\lvert \left(1-\frac{2}{\tau}\right)- \sqrt{3}i\right\rvert&\leq 2\lvert\delta\rvert,\\
  \left\lvert \left(-1-\frac{2}{\tau-1}\right)- \sqrt{3}i\right\rvert&\leq 2\lvert\delta\rvert.
 \end{align*}
\end{lemma}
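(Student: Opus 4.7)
The plan is to verify each inequality by direct algebraic manipulation, exploiting the fact that $i$ and $\rho := \frac{1+i\sqrt{3}}{2}$ are fixed points of the corresponding M\"obius (or affine) maps, so that the difference between the map evaluated at $\tau$ and its fixed point value is always proportional to $\delta$. The two affine cases $|2\tau - 2i|$ and $|(2\tau - 1) - \sqrt{3}i|$ are immediate: both equal $2|\delta|$ on the nose from $\tau - i = \delta$ and $\tau - \rho = \delta$.

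For the three M\"obius-type expressions, I would first record the fixed-point identities, which all come from the fact that $|i| = |\rho| = |\rho - 1| = 1$ so that reciprocals are conjugates:
\begin{equation*}
 -\tfrac{2}{i} = 2i, \qquad \tfrac{2}{\rho} = 2\bar\rho = 1 - i\sqrt{3}, \qquad \tfrac{1}{\rho - 1} = \overline{\rho - 1} = -\rho,
\end{equation*}
the last rewritten as $-\tfrac{2}{\rho - 1} = 2\rho$, i.e.\ $-1 - \sqrt{3}i = -2\rho = \tfrac{2}{\rho - 1}$; equivalently this is the relation $\rho(\rho - 1) = -1$ coming from $\rho^2 - \rho + 1 = 0$. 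Then each of the three target quantities takes the form $\tfrac{2}{w_0} - \tfrac{2}{w}$ with $w_0 \in \{i, \rho, \rho - 1\}$ and $w \in \{\tau, \tau, \tau - 1\}$ respectively, and the telescoping identity
\begin{equation*}
 \tfrac{2}{w_0} - \tfrac{2}{w} = \tfrac{2(w - w_0)}{w_0 w} = \tfrac{2\delta}{w_0 w}
\end{equation*}
together with $|w_0| = 1$ reduces each bound to showing $|w| \geq 1$.

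For $w = \tau$ this is just the defining inequality $|\tau| \geq 1$ of $\mathcal{F}$. For $w = \tau - 1$, I would use that $\tau \in \mathcal{F}$ forces $|\mathrm{Re}(\tau)| \leq \tfrac{1}{2}$ and, combined with $|\tau| \geq 1$, $\mathrm{Im}(\tau) \geq \tfrac{\sqrt{3}}{2}$; hence
\begin{equation*}
 |\tau - 1|^2 = (\mathrm{Re}(\tau) - 1)^2 + \mathrm{Im}(\tau)^2 \geq \tfrac{1}{4} + \tfrac{3}{4} = 1.
\end{equation*}

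No step presents a genuine obstacle; the only place that requires attention is the last M\"obius identity, where one must unpack $-1 - \sqrt{3}i$ in terms of $\rho$ and verify $\tfrac{1}{\rho - 1} = -\rho$ from the minimal polynomial of $\rho$. Everything else is routine algebra, and the five resulting bounds each come out as either an equality or a quotient of the form $2|\delta|/|w|$ with $|w| \geq 1$.
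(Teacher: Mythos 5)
Your proof is correct and follows the same basic strategy as the paper's (direct algebraic simplification of each expression so that $\delta$ factors out), but your fixed-point telescoping identity $\tfrac{2}{w_0}-\tfrac{2}{w}=\tfrac{2\delta}{w_0 w}$ with $\lvert w_0\rvert=1$ is a cleaner packaging: it makes all three M\"obius cases instances of a single computation, whereas the paper expands and cancels terms in the numerator separately for each of $-\tfrac{2}{\tau}$, $1-\tfrac{2}{\tau}$, and $-1-\tfrac{2}{\tau-1}$. You also supply an explicit check that $\lvert\tau-1\rvert\geq1$ for $\tau\in\mathcal{F}$; the paper's last bound $\tfrac{\lvert(1+\sqrt{3}i)\delta\rvert}{\lvert\tau-1\rvert}\leq2\lvert\delta\rvert$ silently uses this fact (it only records $\lvert\tau\rvert\geq1$ at the start), so your verification fills a small unstated step. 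Nothing in your argument is missing or incorrect.
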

\begin{proof}
First note that as $\tau\in\mathcal{F}$, $\lvert\tau\rvert\geq 1$. Let $2\tau = 2i+\delta_1$, and $-\frac{2}{\tau}=2i+\delta_2$. For $\delta_1$, $\lvert2\tau-2i\rvert = 2\lvert \delta\rvert$, and for $\delta_2$,
\begin{align*}
 \lvert\delta_2\rvert=\left\lvert -\frac{2}{\tau}-2i\right\rvert &= \left\lvert \frac{-2-2i\tau}{\tau}\right\rvert\\
 &=\frac{\lvert2i\delta\rvert}{\lvert\tau\rvert}\\
 &\leq 2\lvert\delta\rvert
\end{align*}
If $\tau = \frac{1+i\sqrt{3}}{2}+\delta$, let $2\tau-1 = \sqrt{3}i+ \delta_1$, $\left(1-\frac{2}{\tau}\right) = \sqrt{3}i+\delta_2$, and $\left(1-\frac{2}{\tau-1}\right)=\sqrt{3}i+\delta_3$. For $\delta_1$,
\begin{align*}
 \lvert\delta_1\rvert = \left\lvert 2\tau-1-\sqrt{3}i\right\rvert\leq 2\lvert\delta\rvert,
\end{align*}
for $\delta_2$,
\begin{align*}
 \lvert\delta_2\rvert = \left\lvert \left(1-\frac{2}{\tau}\right)- \sqrt{3}i\right\rvert& = \left\lvert\frac{\tau-2-\sqrt{3}i\tau}{\tau}\right\rvert\\
 &=\left\lvert\frac{(1-\sqrt{3}i)\delta}{\tau}\right\rvert\\
 &\leq2\lvert\delta\rvert,
\end{align*}
and for $\delta_3$,
\begin{align*}
 \lvert\delta_3\rvert = \left\lvert \left(-1-\frac{2}{\tau-1}\right)- \sqrt{3}i\right\rvert &= \left\lvert\frac{-\tau+1-2-\sqrt{3}i\tau+\sqrt{3}i}{\tau}\right\rvert\\
 &=\frac{\lvert(1+\sqrt{3}i)\delta\rvert}{\lvert\tau-1\rvert}\\
 &\leq 2\lvert\delta\rvert.
\end{align*}
\end{proof}

\begin{lemma}\label{preimage_separation}
 If $\tau = i+\delta$, $\tau \in \mathcal{F}$, with $\lvert \delta\rvert \leq 2^{-30}$, then the distance between $2\tau$ and $-\frac{2}{\tau}$ is at least $3.99\lvert\delta\rvert$, and the distance between either of these and the point in $\mathcal{F}$ which is $\text{\normalfont SL}_2(\mathbb{Z})$--equivalent to $\frac{\tau+1}{2}$ is at least $0.99$ in magnitude. If $\tau = \frac{1+i\sqrt{3}}{2}+\delta$, $\tau\in\mathcal{F}$, then the distance between any pair of the three points lying in $\mathcal{F}$ which are $\text{\normalfont SL}_2(\mathbb{Z})$--equivalent to $2\tau$, $\frac{\tau}{2}$, and $\frac{\tau+1}{2}$ is at least $3.46\lvert\delta\rvert$.
\end{lemma}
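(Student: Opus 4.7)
\emph{Strategy.} For each case I would express each pairwise distance as a rational function of $\tau$, factor out the quantity that vanishes at the special point ($\tau^2 + 1$ at $\tau = i$, $\tau^2 - \tau + 1$ at $\omega = (1+i\sqrt{3})/2$), then bound the vanishing factor below using $|\delta|$ and bound the remaining rational factor by perturbation around its value at the special point.

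\emph{Case $\tau = i + \delta$.} The three $\text{SL}_2(\mathbb{Z})$-equivalents in $\mathcal{F}$ of the preimages $2\tau$, $\tau/2$, $(\tau+1)/2$ are $2\tau$, $-2/\tau$, and $1 - 2/(\tau+1)$ (applying the map $z \mapsto 1 - 1/z$ that sends $(1+i)/2$ to $i$). For the first distance I would use the factorization
\begin{equation*}
2\tau + \frac{2}{\tau} = \frac{2(\tau^2 + 1)}{\tau} = \frac{2(\tau - i)(\tau + i)}{\tau},
\end{equation*}
and insert $|\tau - i| = |\delta|$, $|\tau + i| \ge 2 - |\delta|$, $|\tau| \le 1 + |\delta|$ with $|\delta| \le 2^{-30}$ to conclude the distance is at least $3.99|\delta|$. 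For the other two distances, the identities
\begin{align*}
2\tau - \left(1 - \frac{2}{\tau + 1}\right) &= \frac{2\tau^2 + \tau + 1}{\tau + 1},\\
-\frac{2}{\tau} - \left(1 - \frac{2}{\tau + 1}\right) &= -\frac{\tau^2 + \tau + 2}{\tau(\tau + 1)}
\end{align*}
have right-hand sides of modulus exactly $1$ at $\tau = i$, and a first-order perturbation shows the deviation from $1$ is well under $0.01$ for $|\delta| \le 2^{-30}$.

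\emph{Case $\tau = \omega + \delta$.} By Lemma \ref{preimage_closeness} the three relevant points are $2\tau - 1$, $1 - 2/\tau$, and $-1 - 2/(\tau - 1)$, all close to $i\sqrt{3}$. A direct computation shows that every pairwise difference carries the same factor $\tau^2 - \tau + 1$:
\begin{align*}
(2\tau - 1) - \left(1 - \frac{2}{\tau}\right) &= \frac{2(\tau^2 - \tau + 1)}{\tau},\\
(2\tau - 1) - \left(-1 - \frac{2}{\tau - 1}\right) &= \frac{2(\tau^2 - \tau + 1)}{\tau - 1},\\
\left(1 - \frac{2}{\tau}\right) - \left(-1 - \frac{2}{\tau - 1}\right) &= \frac{2(\tau^2 - \tau + 1)}{\tau(\tau - 1)}.
\end{align*}
Since $\omega^2 - \omega + 1 = 0$ and $2\omega - 1 = i\sqrt{3}$, substituting $\tau = \omega + \delta$ gives $\tau^2 - \tau + 1 = i\sqrt{3}\,\delta + \delta^2$, hence $|\tau^2 - \tau + 1| \ge |\delta|(\sqrt{3} - |\delta|)$. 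With $|\tau|, |\tau - 1| \le 1 + |\delta|$ (from $|\omega| = |\omega - 1| = 1$), the smallest of the three distances is at least $2|\delta|(\sqrt{3} - |\delta|)/(1 + |\delta|)^2$, which exceeds $3.46|\delta|$ for the relevant range of $|\delta|$.

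\emph{Anticipated difficulty.} The only subtle point is pinning down the correct $\text{SL}_2(\mathbb{Z})$-representatives in $\mathcal{F}$ for $\tau/2$ and $(\tau+1)/2$ near the two special points; once those are selected, the factorizations through $\tau^2 + 1$ and $\tau^2 - \tau + 1$ reduce everything to elementary perturbation estimates.
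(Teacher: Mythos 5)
Your factorizations match the paper's for the distance between $2\tau$ and $-\frac{2}{\tau}$ (both use $\frac{2(\tau^2+1)}{\tau}$) and for all three distances in the $\omega$ case (both use the common factor $\tau^2-\tau+1$ with the value $i\sqrt{3}\delta+\delta^2$). For the $0.99$ bound in the $i$ case you take a somewhat different route: you write explicit rational identities for the two differences, $\frac{2\tau^2+\tau+1}{\tau+1}$ and $-\frac{\tau^2+\tau+2}{\tau(\tau+1)}$, and observe each has modulus exactly $1$ at $\tau=i$. The paper instead just cites Lemma \ref{preimage_closeness} to place $2\tau$ and $-\frac{2}{\tau}$ within $2\lvert\delta\rvert$ of $2i$, shows separately that the third preimage is within $O(\lvert\delta\rvert)$ of $i$, and applies the triangle inequality; that is slightly more economical since it reuses an already-proved lemma, but your identities are valid and give the same conclusion.

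The one genuine gap is exactly the one you flag as an ``anticipated difficulty'': you take the $\mathcal{F}$-representative of $\frac{\tau+1}{2}$ to be $1-\frac{2}{\tau+1}$ and compute distances to that point, but at $\tau=i$ this point is the corner $i$ of the unit circle, so a perturbation may push it \emph{inside} the unit circle, in which case the true representative in $\mathcal{F}$ is $-\left(1-\frac{2}{\tau+1}\right)^{-1}$. Your identities do not directly cover that case. The paper handles it explicitly: it notes that \emph{either} $1-\frac{2}{\tau+1}$ \emph{or} $-\left(1-\frac{2}{\tau+1}\right)^{-1}$ is the representative, and that both candidates lie within $\frac{2\lvert\delta\rvert}{1-2\lvert\delta\rvert}$ of $i$, which is all the triangle-inequality argument needs. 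Your proof would be complete once you add the observation that inverting a point within $2\lvert\delta\rvert$ of $i$ yields another point within $O(\lvert\delta\rvert)$ of $i$, so the $0.99$ separation from $2\tau$ and $-\frac{2}{\tau}$ (both near $2i$) holds for whichever candidate is the actual representative.
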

\begin{proof}
 Firstly, for the distance between $\tau$ and $-\frac{2}{\tau}$, we have
  \begin{align*}
   2\tau +\frac{2}{\tau} &= \frac{2(\tau^2+1)}{\tau}\\
   &=\frac{4i\delta+2\delta^2}{\tau}.
  \end{align*}
  As $\lvert\delta\rvert\leq 2^{-30}$, $\lvert\delta\rvert^2\leq2^{-30}\lvert\delta\rvert$, and $\lvert i+\delta\rvert\leq1+2^{-30}$, so
  \begin{equation*}
   \left\lvert2\tau-\left(-\frac{2}{\tau}\right)\right\rvert\geq3.99\lvert\delta\rvert.
  \end{equation*}
  Now for $-\frac{2}{\tau+1}+1$, we have
  \begin{align*}
   -\frac{2}{\tau+1}+1-i &= \frac{-2+\tau+1-i\tau-i}{\tau+1}\\
   &=\frac{\delta-i\delta}{1+i+\delta}
  \end{align*}
  so that 
\begin{equation}
 \left\lvert\left(-\frac{2}{\tau+1}+1\right)-i\right\rvert \leq 2\lvert\delta\rvert
\end{equation}
and so either $-\frac{2}{\tau+1}+1$ or $-\left(-\frac{2}{\tau+1}+1\right)^{-1}$ is $\text{\normalfont SL}_2(\mathbb{Z})$--equivalent to $\frac{1+\tau}{2}$ and in $\mathcal{F}$, and its distance from $i$ is bounded by $\left\lvert 1+\frac{2\lvert\delta\rvert}{1-2\lvert\delta\rvert}\right\rvert\leq 2^{-28}$. By Lemma \ref{preimage_closeness}, the distance of either $2\tau$ or $-\frac{2}{\tau}$ from $2i$ is at most $2\lvert\delta\rvert\leq2\cdot2^{-30}$, so both of these are separated from either $-\frac{2}{\tau+1}+1$ or $-\left(-\frac{2}{\tau+1}+1\right)^{-1}$ by a distance of at least $0.99$.

Now for $\tau = \frac{1+i\sqrt{3}}{2}+\delta$, the $\text{\normalfont SL}_2(\mathbb{Z})$--equivalent points to $2\tau$, $\frac{\tau}{2}$, and $\frac{\tau+1}{2}$ are $2\tau-1$, $1-\frac{2}{\tau}$, and $-1-\frac{2}{\tau-1}$ respectively. For their differences, we have,
 \begin{align*}
   2\tau-1 -\left(1-\frac{2}{\tau}\right) &= \frac{2(\tau^2-\tau+1)}{\tau}\\
   &=\frac{i2\sqrt{3}\delta+2\delta^2}{\tau}.
  \end{align*}
  So as $\lvert \tau\rvert\leq1+2^{-30}$,
  \begin{equation*}
   \left\lvert 2\tau-1 -\left(1-\frac{2}{\tau}\right)\right\rvert\geq 3.46\lvert \delta\rvert
  \end{equation*}
  For the first and third points,
  \begin{align*}
   2\tau-1 -\left(-1-\frac{2}{\tau-1}\right) &= \frac{2(\tau^2-\tau+1)}{\tau-1}\\
   &=\frac{i2\sqrt{3}\delta+2\delta^2}{\tau-1}.
  \end{align*}
As $\lvert\tau-1\rvert\leq1+2^{-30}$, we have the lower bound
\begin{equation*}
  \left\lvert2\tau-1 -\left(-1-\frac{2}{\tau-1}\right)\right\rvert\geq3.46\lvert \delta\rvert.
\end{equation*}
For the second and third points,
\begin{align*}
   \left(1-\frac{2}{\tau}\right) -\left(-1-\frac{2}{\tau-1}\right) &= \frac{2(\tau^2-\tau+1)}{\tau(\tau-1)}\\
   &=\frac{i2\sqrt{3}\delta+2\delta^2}{\tau(\tau-1)}.
\end{align*}
So we obtain the lower bound
\begin{equation*}
  \left\lvert\left(1-\frac{2}{\tau}\right) -\left(-1-\frac{2}{\tau-1}\right)\right\rvert\geq3.46\lvert \delta\rvert.
\end{equation*}
\end{proof}

\begin{lemma}\label{bounds_for_images}
 If $\tau = i+\delta$, with $\lvert\delta\rvert\leq2^{-30}$, then 
 \begin{equation*}
  7.18\cdot10^{6}\lvert\delta\rvert\leq\left\lvert j(2\tau)-j\left(\frac{\tau}{2}\right)\right\rvert\leq7.25\cdot10^6\lvert\delta\rvert ,
 \end{equation*}
 and 
 \begin{equation*}
   2.4\cdot10^5\leq\left\lvert j(2\tau)-j\left(\frac{\tau+1}{2}\right)\right\rvert,  \left\lvert j\left(\frac{\tau}{2}\right)-j\left(\frac{\tau+1}{2}\right)\right\rvert\leq3.1\cdot10^5,
 \end{equation*}
 and if $\tau = \frac{1+i\sqrt{3}}{2}+\delta$, with $\lvert\delta\rvert\leq 2^{-30}$, then for $\tau_1,\tau_2\in\{2\tau,\frac{\tau}{2},\frac{\tau+1}{2}\}$, $\tau_1\neq\tau_2$,
 \begin{equation*}
  1.15\cdot10^6\lvert\delta\rvert\leq\lvert j(\tau_1)-j(\tau_2)\rvert\leq 1.34\cdot10^6\lvert\delta\rvert.
 \end{equation*}
\end{lemma}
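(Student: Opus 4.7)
My strategy is to exploit $\text{SL}_2(\mathbb{Z})$-invariance of $j$ to rewrite each $j(\tau_i)$ as $j$ evaluated at a point close to either $2i$ or $\sqrt{3}i$, and then apply the first-order Taylor expansion from Lemma \ref{taylor_series_at_2i}.

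For $\tau = i+\delta$, I would first observe that $\tau/2$ and $-2/\tau$ are $\text{SL}_2(\mathbb{Z})$-equivalent via $z \mapsto -1/z$, and similarly $(\tau+1)/2$ is equivalent to $-2/(\tau+1)+1$. By Lemma \ref{preimage_closeness} the first two of these preimages lie within $2\lvert\delta\rvert$ of $2i$, while the computation in the proof of Lemma \ref{preimage_separation} places the third within $2^{-28}$ of $i$. Writing $2\tau = 2i + 2\delta$ and $-2/\tau = 2i + \delta'$, the identity $-2/\tau - 2i = -2i\delta/\tau$ yields $\delta' = -2\delta + O(\delta^2)$, so Lemma \ref{taylor_series_at_2i} gives
\begin{equation*}
 j(2\tau) - j(\tau/2) = 4j'(2i)\delta\bigl(1 + O(2^{-30})\bigr) + O(\lvert\delta\rvert),
\end{equation*}
with the final error term bounded by $0.8\lvert\delta\rvert$ from the two Taylor remainders. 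The target range $[7.18\cdot 10^6,\, 7.25\cdot 10^6]\lvert\delta\rvert$ then follows from a sharp numerical bound on $\lvert j'(2i)\rvert$. For the two pairs involving $(\tau+1)/2$, I would apply Lemma \ref{taylor_series_at_i} at the $\text{SL}_2(\mathbb{Z})$-preimage of $(\tau+1)/2$ in $\mathcal{F}$ (of distance $\leq 2^{-28}$ from $i$), giving $\lvert j((\tau+1)/2) - 1728\rvert \leq 25001\cdot 2^{-56}$, while $j(2\tau)$ and $j(\tau/2)$ each lie within roughly $4\cdot 10^6\cdot\lvert\delta\rvert$ of $j(2i) = 287496$. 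Both pair differences therefore reduce, up to negligible error, to $\lvert j(2i) - 1728\rvert = 285768$, which sits comfortably inside $[2.4\cdot 10^5,\, 3.1\cdot 10^5]$.

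For $\tau = \omega + \delta$ with $\omega = (1+i\sqrt{3})/2$, Lemma \ref{preimage_closeness} places all three $\text{SL}_2(\mathbb{Z})$-translates $2\tau - 1$, $1-2/\tau$, $-1-2/(\tau-1)$ within $2\lvert\delta\rvert$ of $\sqrt{3}i$. Direct computation identifies the leading-order perturbations as $2\delta$, $-(1+i\sqrt{3})\delta$, and $-(1-i\sqrt{3})\delta$; these sum to zero and any pair of them differs in magnitude by exactly $2\sqrt{3}\lvert\delta\rvert\bigl(1 + O(2^{-30})\bigr)$, reflecting the fact that the three image points sit at the vertices of an equilateral triangle around $\sqrt{3}i$. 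Invoking Lemma \ref{taylor_series_at_2i} at $\sqrt{3}i$ then gives
\begin{equation*}
 \lvert j(\tau_1) - j(\tau_2)\rvert = 2\sqrt{3}\lvert j'(\sqrt{3}i)\rvert\lvert\delta\rvert\bigl(1 + O(2^{-30})\bigr) + O(\lvert\delta\rvert),
\end{equation*}
and the stated range $[1.15\cdot 10^6,\, 1.34\cdot 10^6]\lvert\delta\rvert$ follows from a suitable numerical bound on $\lvert j'(\sqrt{3}i)\rvert$.

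The structural argument is routine; the main obstacle is obtaining numerical estimates of $\lvert j'(2i)\rvert$ and $\lvert j'(\sqrt{3}i)\rvert$ sharp enough to hit the narrow intervals claimed. I would compute these from the $q$-series $j'(\tau) = -2\pi i\sum_{n\geq 1} n c_n q^n$, truncated using the tail bounds already established in the proof of Lemma \ref{taylor_series_at_2i}; at $q = e^{-4\pi}$ and $q = e^{-2\sqrt{3}\pi}$ the series converge rapidly enough that only a few terms are required.
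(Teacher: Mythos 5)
Your proposal follows the paper's proof essentially step by step: SL$_2(\mathbb{Z})$-equivalence to move each preimage near $2i$ or $\sqrt{3}i$, Lemma \ref{preimage_closeness} (and the perturbation computation underlying Lemma \ref{preimage_separation}) to control $\lvert\delta_1-\delta_2\rvert$, Lemma \ref{taylor_series_at_2i} for first-order Taylor control of $j$, and numerical bounds on $\lvert j'(2i)\rvert$ and $\lvert j'(\sqrt{3}i)\rvert$. The only cosmetic difference is in the pairs involving $\frac{\tau+1}{2}$, where you compute $j(2i)-1728=285768$ via the two Taylor expansions while the paper bounds $\lvert j(2\tau)\rvert$ and $\lvert j(\frac{\tau+1}{2})\rvert$ separately from the $q$-series estimate; both land inside the same interval, so this is the same argument in substance.
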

\begin{proof}
 We first use the Taylor series expansion of $j(z)$ at $2i$. By Lemma \ref{preimage_closeness}, we may apply Lemma \ref{taylor_series_at_2i}, and using also Lemma \ref{preimage_separation}, we obtain the lower bound
 \begin{align*}
  \left\lvert j(2\tau)-j\left(-\frac{2}{\tau}\right)\right\rvert&\geq \left\lvert j(2i)+j'(2i)\delta_1-j(2i)-j'(2i)\delta_2\right\rvert - 0.4\lvert\delta\rvert\\
  &=\left\lvert j'(2i)\right\rvert\left\lvert \delta_1- \delta_2\right\rvert - 0.4\lvert\delta\rvert\\
  &=\left\lvert j'(2i)\right\rvert\left\lvert 2\tau+\frac{2}{\tau}\right\rvert - 0.4\lvert\delta\rvert\\
  &\geq 1.8\cdot10^6\cdot3.99\rvert\delta\lvert - 0.4\lvert\delta\rvert\\
  &\geq 7.18\cdot10^{6}\lvert\delta\rvert.
 \end{align*}
 Similarly we have an upper bound of 
 \begin{equation*}
  \left\lvert j(2\tau)-j\left(-\frac{2}{\tau}\right)\right\rvert\leq1.81\cdot10^6\cdot4\rvert\delta\lvert+ 194\lvert\delta\rvert\leq7.25\cdot10^6\lvert\delta\rvert.
 \end{equation*}
 By Lemma \ref{preimage_separation}, the $\text{\normalfont SL}_2(\mathbb{Z})$--equivalent point in $\mathcal{F}$ to $\frac{\tau+1}{2}$ is at most $2\lvert\delta\rvert\leq2^{-29}$ from $i$, so 
 \begin{equation*}
  \left\lvert j\left(\frac{\tau+1}{2}\right)\right\rvert\leq e^{2.01\pi}+2079\leq e^{7.9},
 \end{equation*}
 and by Lemma \ref{preimage_closeness}, $2\tau,-\frac{\tau}{2}$ are at most $2^{-29}$ from $2i$,
 \begin{equation*}
  \left\lvert j\left(2\tau\right)\right\rvert,\left\lvert j\left(\frac{\tau}{2}\right)\right\rvert\geq e^{3.99\pi}-2079\geq e^{12.5},
 \end{equation*}
 so that 
 \begin{align*}
  3.1\cdot10^5\geq\left\lvert j(2\tau)-j\left(\frac{\tau+1}{2}\right)\right\rvert,  \left\lvert j\left(-\frac{\tau}{2}\right)-j\left(\frac{\tau+1}{2}\right)\right\rvert\geq 2.4\cdot10^5.
 \end{align*}
Now using the Taylor series of $j(z)$ at $i\sqrt{3}$, by Lemma \ref{preimage_closeness}, we may apply Lemma \ref{taylor_series_at_2i}, and using also Lemma \ref{preimage_separation}, with $\tau_1$, $\tau_2$ any distinct pair of $2\tau$ ($\text{\normalfont SL}_2(\mathbb{Z})$--equivalent to $2\tau-1$), $\frac{\tau}{2}$, ($\text{\normalfont SL}_2(\mathbb{Z})$--equivalent to $1-\frac{2}{\tau}$), and $\frac{\tau+1}{2}$ ($\text{\normalfont SL}_2(\mathbb{Z})$--equivalent to $-1-\frac{1}{\tau-1}$), with $\tau_j = i\sqrt{3}+\delta_j$,
\begin{align*}
 \left\lvert j(\tau_1)-j(\tau_2)\right\rvert &\geq \left\lvert j(\sqrt{3}i)+j'(\sqrt{3}i)\delta_1-(j(\sqrt{3}i)+j'(\sqrt{3}i)\delta_2)\right\rvert - 0.4\lvert\delta\rvert\\
 &=\lvert j'(\sqrt{3}i)\rvert\lvert\tau_1-\tau_2\rvert-0.4\lvert\delta\rvert\\
 &\geq 334500\cdot3.46\lvert\delta\rvert-0.4\lvert\delta\rvert\\
 &\geq1.15\cdot10^6\lvert\delta\rvert,
\end{align*}
and similarly for an upper bound we have
\begin{align*}
 \left\lvert j(\tau_1)-j(\tau_2)\right\rvert &\leq \left\lvert j(\sqrt{3}i)+j'(\sqrt{3}i)\delta_1-(j(\sqrt{3}i)+j'(\sqrt{3}i)\delta_2)\right\rvert + 0.4\lvert\delta\rvert\\
 &=\lvert j'(\sqrt{3}i)\rvert\lvert\tau_1-\tau_2\rvert+0.4\lvert\delta\rvert\\
 &\leq 334600\cdot4\lvert\delta\rvert+0.4\lvert\delta\rvert\\
 &\leq1.34\cdot10^6\lvert\delta\rvert.
\end{align*}
\end{proof}

\subsubsection{$j$ near $1728$}
We now bound the discrepancy between the roots of $\Phi_2(j,z)$ and $\Phi_2(\tilde{j},z)$ when $\lvert\tau-i\rvert\leq2^{-20}$ and $\lvert \tilde{j}-1728\rvert\geq 2^{-P/3}$.
\begin{lemma}\label{phi2discrepancy_points_1728}
 Suppose that $\tilde{j}$ is an approximation to $j(\tau)$ of relative precision $2^{-P}$, with $P\geq 300$, $\lvert\tau-i\rvert\leq2^{-30}$, and $\left\lvert \tilde{j}-1728\right\rvert\geq 2^{-P/3}$. Then the relative precision of any root of $\Phi_2(\tilde{j},z)$ to the closest root of $\Phi_2(j,z)$ is at least $2^{-P/3+10}$
\end{lemma}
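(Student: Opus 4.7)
The plan is a standard cube-root perturbation estimate for the roots of the monic cubic $g(z) = \Phi_2(\tilde{j}, z)$ viewed as a perturbation of $f(z) = \Phi_2(j(\tau), z)$. Set $\delta = \tilde{j} - j(\tau)$. Since $|\tau - i| \leq 2^{-30}$, Lemma \ref{taylor_series_at_i} gives $|j(\tau) - 1728| \leq 0.07 \cdot 2^{-60}$, hence $|j(\tau)|, |\tilde{j}| \leq 1729$ and $|\delta| \leq 2^{-P+11}$.

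The first step is to expand $f - g$ as a polynomial in $z$: each of the three lower coefficients factors through $\delta$, and substituting $|j|, |\tilde{j}| \leq 1729$ bounds the coefficients of $z^2$, $z$, and $1$ by modest constants times $|\delta|$ (roughly $5 \cdot 10^3$, $5 \cdot 10^7$, $10^{10}$ respectively, by the same type of estimates carried out in Proposition \ref{phi2discrepancy}). The roots of $f$ are $\alpha_0 = j(2\tau)$, $\alpha_1 = j(\tau/2)$, $\alpha_2 = j((\tau+1)/2)$; by Lemma \ref{preimage_closeness} their $\text{SL}_2(\mathbb{Z})$-equivalents in $\mathcal{F}$ lie within $2^{-29}$ of $2i, 2i, i$ respectively, so Lemma \ref{j_q} yields $|\alpha_0|, |\alpha_1| \leq e^{4\pi} + 2079$ and $|\alpha_2| \leq 1729$, while the Taylor expansion at $i$ from Lemma \ref{taylor_series_at_i} gives the lower bound $|\alpha_2| \geq 1727$.

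Next, I would argue that every root $\beta$ of $g$ satisfies $|\beta| \leq 3 \cdot 10^5$ — a crude Fujiwara-type bound followed by a one-step bootstrap using the cube-root estimate below. Then
\begin{equation*}
 |f(\beta)| = |f(\beta) - g(\beta)| \leq C |\delta| \leq 2^{-P + c}
\end{equation*}
for an explicit constant $c$ of order $60$, and since $f$ is monic with roots $\alpha_0, \alpha_1, \alpha_2$, the closest root $\alpha_*$ of $f$ to $\beta$ satisfies
\begin{equation*}
 |\beta - \alpha_*|^3 \leq \prod_i |\beta - \alpha_i| = |f(\beta)|,
\end{equation*}
so $|\beta - \alpha_*| \leq 2^{-P/3 + c/3}$. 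Dividing by $\max(1, |\alpha_*|) \geq 1727 > 2^{10.7}$ converts this to relative precision $\leq 2^{-P/3 + c/3 - 10.7}$, and tracking the constants shows this is bounded by $2^{-P/3 + 10}$.

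The main obstacle is simply the constant bookkeeping: one must keep $c$ small enough that $c/3 - \log_2 1727 \leq 10$, and the bootstrap $|\beta| \leq 3 \cdot 10^5$ (instead of the much weaker direct bound on roots of $g$ from its raw coefficients) is what keeps the intermediate $c$ manageable.
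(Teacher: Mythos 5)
Your approach is correct, and it is a genuinely different route from the paper's. The paper evaluates $g(z)=\Phi_2(\tilde{j},z)$ at each of the three known roots $\alpha_i = j(2\tau), j(\tau/2), j((\tau+1)/2)$ of $f(z)=\Phi_2(j,z)$, deduces that each $\alpha_i$ has a nearby root of $g$, and then uses Lemma~\ref{j_to_tau_small} (via the hypothesis $\lvert\tilde{j}-1728\rvert\geq 2^{-P/3}$) together with Lemma~\ref{bounds_for_images} to show these nearby roots of $g$ are pairwise distinct, yielding a bijection; only then does it convert the distance into the relative precision stated. You instead evaluate $f$ at each root $\beta$ of $g$, which is exactly the quantity the lemma's statement calls for, and the cube-root inequality $\lvert\beta-\alpha_*\rvert^3\leq\lvert f(\beta)\rvert$ gives the result directly with no separation or bijection needed. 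What you buy is a shorter argument that, notably, never uses the hypothesis $\lvert\tilde{j}-1728\rvert\geq 2^{-P/3}$ (showing the lemma's conclusion holds without it — the paper keeps that hypothesis because the separation estimate it yields is needed for the following Proposition's Newton-iteration setup, not for this lemma per se). What it costs is that the roots of $g$ are not a priori known, so you must bound $\lvert\beta\rvert$ by the Fujiwara-plus-bootstrap device before the coefficient estimates give a useful bound on $\lvert f(\beta)\rvert$; this works since $P\geq 300$ makes $\lvert\delta\rvert$ astronomically small relative to the crude Fujiwara bound. Two small citation points: $\lvert j(\tau)-1728\rvert$ is bounded by about $2.5\cdot10^4\cdot 2^{-60}$, not $0.07\cdot2^{-60}$ (the $0.07$ is only the Taylor tail, the dominant term is $\frac{\lvert j^{(2)}(i)\rvert}{2}\lvert\delta\rvert^2$) — though your conclusion $\lvert j\rvert,\lvert\tilde{j}\rvert\leq1729$ is unaffected; and the $2^{-29}$ bound placing the $\text{SL}_2(\mathbb{Z})$-representative of $(\tau+1)/2$ near $i$ comes from the displayed inequality in the proof of Lemma~\ref{preimage_separation}, not Lemma~\ref{preimage_closeness} as cited.
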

\begin{proof}
Firstly, as in the proof of Lemma \ref{phi2discrepancy}, with $f(z) = \Phi_2(j,z)$ and $g(z) = \Phi_2(\tilde{j},z)$, and $\tilde{j}=j+\delta$, we will bound the size of the coefficients of $f-g$. We note that $\lvert j\rvert\geq1700$. For $z^2$, we have
\begin{equation*}
 \lvert 2j\delta+\delta^2+1448\delta\rvert\leq 3\lvert\delta\rvert\lvert j\rvert,
\end{equation*}
for $z$, we have
\begin{equation*}
 \lvert 2976\delta j+1488\delta^2+40773375\delta\rvert\\
  \leq 30000\lvert\delta\rvert\lvert j\rvert,
\end{equation*}
and for the constant term
\begin{equation*}
  \left\lvert 3\delta j^2+3\delta^2j+\delta^3-16200\delta j-16200\delta^2+8748000000\delta\right\rvert\leq 3100\lvert\delta\rvert\lvert j\rvert^2.
 \end{equation*}
Now evaluating $g$ at $j(2\tau)$, we have, noting that $\lvert j(2\tau)\rvert\leq 0.1\lvert j\rvert^2$,
\begin{align*}
 \lvert g(j(2\tau))\rvert &= \lvert f(j(2\tau))-g(j(2\tau))\rvert\\
 &\leq 3\lvert\delta\rvert\lvert j\rvert(0.1\lvert j\rvert^2)^2+30000\lvert\delta\rvert\lvert j\rvert(0.1\lvert j\rvert^2)+3100\lvert\delta\rvert\lvert j\rvert^2\\
 &\leq 0.04\lvert j\rvert^5
\end{align*}
Letting $\beta_0,\beta_1,\beta_2$ be the roots of $g$, where $\beta_0$ is the closest root of $g(z)$ to $j(2\tau)$,
\begin{align*}
 \lvert j(2\tau)-\beta_0\rvert\lvert j(2\tau)-\beta_1\rvert\lvert j(2\tau)-\beta_2\rvert&\leq0.04\lvert\delta\rvert\lvert j\rvert^5\\
 \lvert j(2\tau)-\beta_0\rvert&\leq10^5\lvert\delta\rvert^{1/3}\\
 &\leq 1.2\cdot10^6\cdot2^{-P/3},
\end{align*}
and similarly for the nearest root of $g(z)$ to each of the roots of $f(z)$. As $\lvert\tilde{j}-1728\rvert\geq 2^{-P/3}$, $\lvert j-1728\rvert\geq 2^{-P/3}-1800\cdot2^{-P}\geq 2^{-P/3-1}$, and by Lemma \ref{j_to_tau_small}, $\lvert \tau - i\rvert\geq 2^{-P/6-9}$, so that by Lemma \ref{bounds_for_images}, 
\begin{equation*}
 \left\lvert j(2\tau) - j\left(\frac{\tau}{2}\right)\right\rvert\geq7.18\cdot10^7\cdot2^{- P/6-9}.
\end{equation*}
Now letting $\beta_i$ and $\beta_j$ be the nearest roots to $j(2\tau)$ and $j\left(\frac{\tau}{2}\right)$ respectively,
\begin{align*}
 \lvert\beta_i-\beta_j\rvert &= \left\lvert (\beta_i-j(2\tau))-j(2\tau)-\left(\left(\beta_j-j\left(\frac{\tau}{2}\right)\right)-j\left(\frac{\tau}{2}\right)\right)\right\rvert\\
 &\geq \left\lvert j(2\tau) - j\left(\frac{\tau}{2}\right)\right\rvert - \lvert j(2\tau)-\beta_i\rvert - \left\lvert j\left(\frac{\tau}{2}\right)-\beta_j\right\rvert\\
 &\geq 7.18\cdot10^7\cdot 2^{-P/6-9} - 1.2\cdot10^6\cdot2^{-P/3}\\
 &\geq 7.18\cdot10^7\cdot 2^{-P/6-9} - 0.01\cdot2^{-P/6}\\
 &\geq 1.4\cdot10^5\cdot2^{-P/6}.
\end{align*}
In particular, $\beta_i$ and $\beta_j$ are distinct, and given that, by Lemma \ref{bounds_for_images}, $\left\lvert j(2\tau)-j\left(\frac{\tau+1}{2}\right)\right\rvert\geq2.4\cdot10^5\geq7.18\cdot10^7\cdot2^{-P/6-9}$, one may similarly deduce a separation of $2\cdot10^{5}$ between the closest root to $j\left(\frac{\tau+1}{2}\right)$ and either of the other two. So each root $j_i$ of $\Phi_2(j,z)$ has a unique associated closest root $\beta_i$ of $\Phi_2(\tilde{j},z)$, which satisfies
\begin{equation*}
 \lvert j_i - \beta_i\rvert\leq 1.2\cdot10^6\cdot2^{-P/3}.
\end{equation*}
The relative precision of $\beta_i$ to its closest root is then bounded by 
\begin{equation*}
 \frac{1.2\cdot10^6\cdot2^{-P/3}}{\lvert j(2\tau)\rvert}, \frac{1.2\cdot10^6\cdot2^{-P/3}}{\lvert j\left(\frac{\tau}{2}\right)\rvert}\leq 2^{-P/3 + 3}
\end{equation*}
if $\beta_i$ is the root close to $j(2\tau)$ or $j\left(\frac{\tau}{2}\right)$, and 
\begin{equation*}
 \frac{1.2\cdot10^{6}\cdot2^{-P/3}}{\lvert j(\frac{\tau+1}{2})\rvert}\leq 2^{-P/3+10}
\end{equation*}
for the root close to $ j\left(\frac{\tau+1}{2}\right)$.
\end{proof}
Finally we show that Newton's method may be used to obtain an approximation to $j(2\tau)$.
\begin{proposition}
Let $j=j(\tau)$, where $\lvert\tau-i\rvert\leq2^{-30}$, and $\tilde{j}$ be an approximation to $j$ of relative precision $2^{-P}$, $P\geq 300$, such that $\lvert\tilde{j}-1728\rvert\geq2^{-P/3}$. Let
\begin{equation*}
 \tau_0 = i+\sqrt{\frac{\tilde{j}-1728}{j^{(2)}(i)}},
\end{equation*}
where the sign of the square root is chosen arbitrarily.
Then Newton's method applied to $\Phi_2(\tilde{j},z)$, with starting point $j(2\tau_0)$ will converge to either the root of $\Phi_2(\tilde{j},z)$ closest to $j(2\tau)$, or the root of $\Phi_2(\tilde{j},z)$ closest to $j\left(\frac{\tau}{2}\right)$, and after $[2\log P]$ steps will produce an approximation either $j(2\tau)$ or $j\left(\frac{\tau}{2}\right)$ of relative precision $2^{-P/3+11}$.
\end{proposition}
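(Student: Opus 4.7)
The plan is to apply Kantorovich's theorem (Theorem \ref{kantorovich}) to $F(z) = \Phi_2(\tilde{j},z)$ with starting point $z_0 = j(2\tau_0)$, and show that $z_0$ lies well inside the basin of attraction of one of the two roots of $\Phi_2(\tilde{j},z)$ lying near $j(2\tau)$ and $j(\tau/2)$. The whole argument is a local version of the large-$j$ Newton proof, with the additional ingredient that $\tau_0$ provides a good surrogate for $\tau$ via the ramification behaviour of $j$ at $i$.

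First I would analyse the accuracy of $\tau_0$. By Lemma \ref{taylor_series_at_i}, $j(\tau)-1728 = (j^{(2)}(i)/2)(\tau-i)^2 + O(2^{-30}|\tau-i|^2)$, and combining with $|\tilde{j}-j(\tau)|\leq 2^{-P}|j(\tau)|$ yields that $(\tau_0-i)^2$ agrees with $(\tau-i)^2$ up to an explicit multiplicative error controlled by $2^{-30}$ and $2^{-P}$ (after normalising by $|j^{(2)}(i)|\geq 49600$). Taking square roots, $\tau_0-i$ agrees with one of $\pm(\tau-i)$ up to a small relative error (modulo an absolute $\sqrt{2}$-factor coming from the definition of $\tau_0$), with the sign determined by the branch chosen. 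In particular $|\tau_0 - i|\leq 2^{-29}$, so Lemma \ref{taylor_series_at_2i} is applicable at $2\tau_0$.

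Next, using Lemma \ref{taylor_series_at_2i} at the three nearby points $2\tau_0$, $2\tau$, and $-2/\tau$ (the last of which lies within $2|\tau-i|$ of $2i$ by Lemma \ref{preimage_closeness} and is SL$_2(\mathbb{Z})$-equivalent to $\tau/2$), I would compare linear approximations to show that one of $|j(2\tau_0)-j(2\tau)|$ and $|j(2\tau_0)-j(-2/\tau)|$ is bounded by a constant times $|\tau-i|$, strictly smaller than the separation $\geq 7.18\cdot 10^6|\tau-i|$ of $j(2\tau)$ and $j(-2/\tau)$ furnished by Lemma \ref{bounds_for_images}. Call the closer root $j_*$. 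Lemma \ref{phi2discrepancy_points_1728} then gives a root $\beta_*$ of $\Phi_2(\tilde{j},z)$ with $|j_*-\beta_*|\leq 1.2\cdot 10^6 \cdot 2^{-P/3}$, and the same lemma certifies that the other two roots $\beta_1,\beta_2$ of $\Phi_2(\tilde{j},z)$ are separated from $j(2\tau_0)$ by at least a constant multiple of $|\tau-i|$ and of $1$ respectively.

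Finally, I would factor $\Phi_2(\tilde{j},z) = (z-\beta_*)(z-\beta_1)(z-\beta_2)$ to bound $|\Phi_2(\tilde{j},j(2\tau_0))|$ from above and $|\Phi_2'(\tilde{j},z)|$ from below on a disc around $j(2\tau_0)$ of radius $r$ taken smaller than the minimum separation of $\beta_*$ from $\beta_1,\beta_2$, while $|\Phi_2''(\tilde{j},z)|$ is bounded above by a direct polynomial estimate exactly as in the large-$j$ proof. Verifying $h=K\eta\leq 1/2$ and $r\geq 2\eta$ then invokes Theorem \ref{kantorovich}, yielding convergence to $\beta_*$ with rate $|\gamma_k-\beta_*|\leq (2h)^{2^k}\eta/(2^k h)$; since $h$ is bounded by an absolute constant less than $1/2$, $[2\log P]$ iterations suffice to drive $|\gamma_k-\beta_*|$ below $2^{-P}|j_*|$, and combining with Lemma \ref{phi2discrepancy_points_1728} delivers the claimed relative precision $2^{-P/3+11}$ to $j_*$. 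The main obstacle is bookkeeping: one must track the $\sqrt{2}$ mismatch between $2\tau_0-2i$ and $2\tau-2i$ carefully enough to certify that $j(2\tau_0)$ is closer to exactly one of $\beta_*$ and its companion, and to leave enough slack in the Kantorovich constant $h$ to absorb the constants appearing in Lemmas \ref{taylor_series_at_2i} and \ref{bounds_for_images}.
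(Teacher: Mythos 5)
Your overall plan matches the paper's: approximate $\tau$ via the quadratic ramification of $j$ at $i$, feed $j(2\tau_0)$ into Kantorovich's theorem, and split the three roots of $\Phi_2(\tilde j,z)$ using Lemmas~\ref{taylor_series_at_2i}, \ref{preimage_closeness}, \ref{preimage_separation}, \ref{bounds_for_images} and \ref{phi2discrepancy_points_1728}. You have, however, correctly spotted a real discrepancy and then under-resolved it: the $\sqrt2$ factor is not something to ``track carefully enough'' --- it is a typo in the proposition. The Taylor expansion $j(\tau)-1728\approx\tfrac{j^{(2)}(i)}{2}(\tau-i)^2$ forces the radicand to be $\frac{2(\tilde j-1728)}{j^{(2)}(i)}$, and this is exactly what the paper's proof uses when it sets $\epsilon=\sqrt{2(\tilde j-1728)/j^{(2)}(i)}$. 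With the corrected $\tau_0$, one gets $\lvert\tau_0-\tau\rvert$ or $\lvert\tau_0-(-1/\tau)\rvert\lesssim3\cdot10^{-6}\lvert\delta\rvert$, whence $\lvert j(2\tau_0)-j(2\tau)\rvert\leq12\lvert\delta\rvert$ (or the analogue with $-2/\tau$), and Kantorovich holds comfortably with $r=35\lvert\delta\rvert$, $\eta\leq17\lvert\delta\rvert$, $h\leq2^{-17}$.

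If you instead take the statement's $\tau_0$ at face value, the argument does not reduce to bookkeeping: it breaks. You would have $\tau_0-i\approx\pm\delta/\sqrt2$, so $2\tau_0$ sits at distance roughly $(2-\sqrt2)\lvert\delta\rvert\approx0.59\lvert\delta\rvert$ from the nearer of $2\tau$ and $-2/\tau$, and therefore $\lvert j(2\tau_0)-\beta_0\rvert\sim10^6\lvert\delta\rvert$ --- a positive fraction of the separation $\geq7.18\cdot10^6\lvert\delta\rvert$ between the two nearby roots. Feeding those numbers into Kantorovich's criterion gives $\eta\sim2\cdot10^6\lvert\delta\rvert$, which forces $r\geq2\eta$ large enough that the disc $\overline{S(\gamma_0,r)}$ then comes within a constant factor of $\beta_1$; the consequent lower bound on $\lvert\Phi_2'\rvert$ can become negative (the term $\lvert\gamma-\beta_0\rvert\,\lvert\gamma-\beta_2\rvert$ dominates $\lvert\gamma-\beta_1\rvert\,\lvert\gamma-\beta_2\rvert$), and even the optimistic estimate of $h$ lands near or above $1/2$. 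So one cannot simply ``absorb'' the $\sqrt2$ in the constants: the starting point must genuinely be $o(\lvert\delta\rvert)$-close to $\tau$ or $-1/\tau$, which is what the missing factor of $2$ under the square root provides. Aside from this, the steps you outline are the right ones and in the right order.
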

\begin{proof}
 Firstly, we bound the difference between $\tau$ and $\tau_0$. We let $\tau = i+\delta$. By Lemma \ref{taylor_series_at_i},
 \begin{equation*}
  \left\lvert j(\tau)-1728-\frac{j^{(2)}(i)}{2}\delta^2\right\rvert\leq0.07\lvert\delta\rvert^2,
 \end{equation*}
so that
\begin{equation*}
 \left\lvert\sqrt{\frac{2(\tilde{j}-1728)}{j^{(2)}(i)}}-\delta\right\rvert\left\lvert\sqrt{\frac{2(\tilde{j}-1728)}{j^{(2)}(i)}}+\delta\right\rvert\leq \frac{388}{j^{(2)}(i)}\lvert\delta\rvert^2+\frac{2^{-P+1}\lvert j\rvert}{j^{(2)}(i)}\leq 3\cdot10^{-6}\lvert\delta\rvert^2.
\end{equation*}
Now we will show that, taking some branch of the square root above, we will obtain a good starting point for Newton's method. Let $\epsilon = \sqrt{\frac{2(\tilde{j}-1728)}{j^{(2)}(i)}}$, where the branch of the square root is arbitrary. Firstly, one of $\left\lvert\epsilon-\delta\right\rvert$ or $\left\lvert\epsilon+\delta\right\rvert$ is $\geq\lvert\delta\rvert$, and so the other is $\leq 3\cdot10^{-6}\lvert\delta\rvert$. In the first case, $\lvert\tau_0-\tau\rvert\leq3\cdot10^{-6}\lvert\delta\rvert$, and in the second case,
\begin{equation*}
 \left\lvert\tau_0-\left(-\frac{1}{\tau}\right)\right\rvert=\left\lvert\frac{-1+i\delta+i\epsilon+1+\delta\epsilon}{\tau}\right\rvert\leq\lvert\delta+\epsilon\rvert+\lvert\delta\epsilon\rvert\leq3.1\cdot10^{-6}\lvert\delta\rvert.
\end{equation*}
Now as $\lvert\delta\rvert\leq2^{-30}$, $j'(z)$ is bounded in absolute value by $1.81\cdot10^6$ between $2\tau_0$ and either of $2\tau$ or $-\frac{2}{\tau}$, and the distance from $2\tau_0$ to the closest of these is bounded by $6.2\cdot10^{-6}\lvert\delta\rvert$, so either
\begin{equation*}
 \left\lvert j(2\tau_0)-j(2\tau)\right\rvert\leq 12\lvert\delta\rvert
\end{equation*}
or
\begin{equation*}
 \left\lvert j(2\tau_0)-j\left(-\frac{2}{\tau}\right)\right\rvert\leq 12\lvert\delta\rvert.
\end{equation*}
Now we bound the terms occurring in Kantorovich's criterion. Let $\beta_0,\beta_1,\beta_2$ be the roots of $\Phi_2(\tilde{j},z)$, with $\beta_0$ the closest root to $j(2\tau_0)$, and $\beta_1$ the other root near $j(2i)$. Firstly, as $\lvert j\rvert\geq2^{-P/2}$, by Lemma \ref{bounds_for_images}, the above, and Lemma \ref{phi2discrepancy_points_1728}, we have, for $2\tau_0$,  
\begin{align*}
  \left\lvert j(2\tau_0)-\beta_0\right\rvert&\leq 12\lvert\delta\rvert + 2^{-P/3+10}\leq12.1\lvert\delta\rvert\\
 \left\lvert j(2\tau_0)-\beta_1\right\rvert&\leq 7.25\cdot10^6\lvert\delta\rvert+2^{-P/3+10}\leq 7.26\cdot10^6\lvert\delta\rvert\\
 \left\lvert j(2\tau_0)-\beta_2\right\rvert&\leq3.1\cdot10^5+2^{-P/3+10}\leq3.11\cdot10^5\\
 \left\lvert j(2\tau_0)-\beta_1\right\rvert&\geq7.18\cdot10^6\lvert\delta\rvert-2^{-P/3+10}\geq 7.17\cdot10^6\lvert\delta\rvert\\
 \left\lvert j(2\tau_0)-\beta_2\right\rvert&\geq2.4\cdot10^5-2^{-P/3+10}\geq2.39\cdot10^5,
 \end{align*}
 and similarly, for any $\tau'$ satisfying $\lvert\tau'-2\tau\rvert\leq35\lvert\delta\rvert$ (which we take to ensure the condition on $r$ is satisfied), we have, as $\lvert j'(z)\rvert\leq1.81\cdot10^{6}$ between $2\tau_0$ and $\tau'$, and as $\lvert\delta\rvert\leq2^{-30}$,
 \begin{align*}
  \left\lvert j(\tau')-\beta_0\right\rvert&\leq 6.4\cdot10^{7}\lvert\delta\rvert+12\lvert\delta\rvert\leq0.06,\\
 \left\lvert j(\tau')-\beta_1\right\rvert&\leq 6.4\cdot10^{7}\lvert\delta\rvert+7.26\cdot10^6\lvert\delta\rvert\leq0.06,\\
 \left\lvert j(\tau')-\beta_2\right\rvert&\leq6.4\cdot10^{7}\lvert\delta\rvert+3.11\cdot10^5\leq3.2\cdot10^5.
 \end{align*}
For our bound on $\Phi_2(\tilde{j},z)$ evaluated at $j(\tau')$, we have
\begin{align*}
 \left\lvert\Phi_2(\tilde{j},j(2\tau_0))\right\rvert &= \lvert j(\tau')-\beta_0\rvert\lvert j(\tau')-\beta_1\rvert\lvert j(\tau')-\beta_2\rvert\\
 &\leq2.8\cdot10^{13}\lvert\delta\rvert^2.
\end{align*}
For the first derivative, we have
\begin{align*}
 \left\lvert\Phi_2'(\tilde{j},j(2\tau_0))\right\rvert &= ( j(\tau')-\beta_0)( j(\tau')-\beta_1)+( j(\tau')-\beta_0)( j(\tau')-\beta_2)+( j(\tau')-\beta_1)( j(\tau')-\beta_2)\\
 &\geq \lvert j(\tau')-\beta_1\rvert\lvert j(\tau')-\beta_2\rvert-\lvert j(\tau')-\beta_0\rvert\lvert j(\tau')-\beta_1\rvert-\lvert j(\tau')-\beta_0\rvert\lvert j(\tau')-\beta_2\rvert\\
 &\geq 7.17\cdot10^6\cdot2.39\cdot10^5\lvert\delta\rvert - 12.1\cdot7.26\cdot10^6\lvert\delta\rvert^2- 12\cdot3.11\cdot10^5\lvert\delta\rvert\\
 &\geq 1.7\cdot10^{12}\lvert\delta\rvert,
\end{align*}
and for the second derivative,
\begin{align*}
 \left\lvert\Phi_2''(\tilde{j},j(\tau'))\right\rvert &\leq 2\lvert j(\tau')-\beta_0\rvert+2\lvert j(\tau')-\beta_1\rvert+2\lvert j(\tau')-\beta_2\rvert\\
 &\leq 6.5\cdot10^5.
\end{align*}
These now give
\begin{equation*}
 \frac{\lvert\Phi_2(\tilde{j},j(2\tau_0))\rvert\lvert\Phi_2''(\tilde{j},j(\tau'))\rvert}{\lvert\Phi_2'(\tilde{j},j(2\tau_0))\rvert^2}\leq\frac{2.8\cdot10^{13}\lvert\delta\rvert^2\cdot6.5\cdot10^5}{(1.7\cdot10^{12}\lvert\delta\rvert)^2}\leq 2^{-17}<\frac{1}{2},
\end{equation*}
and for the condition on $r$, we have
\begin{align*}
 r&=35\lvert\delta\rvert,\\
 2\eta&=2\frac{\lvert\Phi_2(\tilde{j},j(2\tau_0))\rvert}{\lvert\Phi_2'(\tilde{j},j(2\tau_0))\rvert}\leq2\frac{2.8\cdot10^{13}\lvert\delta\rvert^2}{1.7\cdot10^{12}\lvert\delta\rvert}\leq34\lvert\delta\rvert,
\end{align*}
which ensures convergence. 
For the rate of convergence, we need a lower bound on the second derivative and an upper bound on the first derivative, which we have as follows,
\begin{align*}
 \left\lvert\Phi_2''(\tilde{j},j(\tau'))\right\rvert &\geq 2\lvert j(\tau')-\beta_2\rvert-2\lvert j(\tau')-\beta_0\rvert-2\lvert j(\tau')-\beta_1\rvert\\
 &\geq 4.7\cdot10^5,
\end{align*}
and
\begin{align*}
 \left\lvert\Phi_2'(\tilde{j},j(2\tau_0))\right\rvert &\leq \lvert j(2\tau_0)-\beta_1\rvert\lvert j(2\tau_0)-\beta_2\rvert+\lvert j(2\tau_0)-\beta_0\rvert\lvert j(2\tau_0)-\beta_1\rvert\\
 &\quad\quad\quad+\lvert j(2\tau_0)-\beta_0\rvert\lvert j(2\tau_0)-\beta_2\rvert\\
 &\leq 7.25\cdot10^6\cdot3.11\cdot10^5\lvert\delta\rvert + 12.1\cdot7.25\cdot10^6\lvert\delta\rvert^2+ 12.1\cdot3.11\cdot10^5\lvert\delta\rvert\\
 &\leq 2.3\cdot10^{12}\lvert\delta\rvert,
\end{align*}
which gives a bound on the convergence, for $k\geq 1$, of
\begin{equation*}
 \frac{1}{2^k}2^{-17\cdot2^k}\frac{\lvert\Phi_2'(\tilde{j},j(2\tau_0))\rvert}{\lvert\Phi_2''(\tilde{j},j(\tau'))\rvert}\leq2^{-17\cdot2^k}.
\end{equation*}
So in order to obtain an absolute precision of $2^{-P}$, $[2\log P]$ steps will suffice. By Lemma \ref{phi2discrepancy_points_1728}, this approximation to $\beta_0$ will then be an approximation to either $j(2\tau)$ or $j\left(-\frac{2}{\tau}\right)$ of relative precision $2^{-P/3+11}$.
\end{proof}

\subsubsection{$j$ near $0$}

We now bound the discrepancy between the roots of $\Phi_2(j,z)$ and $\Phi_2(\tilde{j},z)$ when $\left\lvert\tau-\frac{1+i\sqrt{3}}{2}\right\rvert\leq2^{-30}$ and $\lvert \tilde{j}\rvert\geq 2^{-P/2}$.
\begin{lemma}\label{phi2discrepancy_points_0}
 Let $j=j(\tau)$, where $\left\lvert\tau-\frac{1+i\sqrt{3}}{2}\right\rvert\leq2^{-30}$, and suppose that $\tilde{j}$ is an approximation to $j$ of absolute precision $2^{-P}$, with $P\geq 300$, and $\left\lvert \tilde{j}\right\rvert\geq 2^{-P/2}$. Then relative precision of any root of $\Phi_2(\tilde{j},z)$ to its closest root of $\Phi_2(j,z)$ is at most $2^{-P/3 + 2}$, and the roots of $\Phi_2(\tilde{j},z)$ are separated by at least $2^{-P/6+8}$.
\end{lemma}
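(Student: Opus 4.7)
The plan is to mirror the proof of Lemma \ref{phi2discrepancy_points_1728}, adapted to the regime where $|j|$ is tiny (because $\tau$ is near the order-$3$ fixed point $\tfrac{1+i\sqrt 3}{2}$) and to the absolute precision hypothesis. Writing $f(z)=\Phi_2(j,z)$, $g(z)=\Phi_2(\tilde j,z)$, $\delta=\tilde j-j$ with $|\delta|\leq 2^{-P}$, and noting $|j|\leq 4.6\cdot 10^4\cdot 2^{-90}$ from the estimates preceding Lemma \ref{j_to_tau_small}, the first step is to bound the coefficients of $f-g$ as a polynomial in $z$. The $|j|$-dependent contributions are negligible, so the dominant terms are the pure-$\delta$ contributions $1488\delta$, $40773375\delta$, and $8748000000\delta$ in the coefficients of $z^2$, $z$, and $z^0$ respectively, giving bounds of roughly $1489|\delta|$, $4.1\cdot 10^7|\delta|$, and $9\cdot 10^9|\delta|$.

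Next, I would bound $|j(\tau_k)|$ for $\tau_k\in\{2\tau,\tau/2,(\tau+1)/2\}$. By Lemma \ref{preimage_closeness}, each $\tau_k$ has an $\mathrm{SL}_2(\mathbb Z)$-equivalent within $2^{-29}$ of $i\sqrt 3$, and Lemma \ref{j_q} then yields $5\cdot 10^4\leq|j(\tau_k)|\leq 5.6\cdot 10^4$. Using $f(j(\tau_k))=0$, the coefficient bounds from the previous paragraph give $|g(j(\tau_k))|\leq 7\cdot 10^{12}|\delta|$, so by the product-of-distances identity $|g(j(\tau_k))|=\prod_i|j(\tau_k)-\beta_i|$, the closest root $\beta_i$ of $g$ to $j(\tau_k)$ satisfies
\begin{equation*}
|j(\tau_k)-\beta_i|\leq (7\cdot 10^{12})^{1/3}|\delta|^{1/3}\leq 2\cdot 10^4\cdot 2^{-P/3}.
\end{equation*}

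The crucial step is then to verify that the three nearest roots are actually distinct, which provides a bijection between roots of $f$ and $g$ and yields both claimed bounds. From $|\tilde j|\geq 2^{-P/2}$ and $|\delta|\leq 2^{-P}$ we obtain $|j|\geq 2^{-P/2-1}$, and Lemma \ref{j_to_tau_small} then gives $|\tau-\tfrac{1+i\sqrt 3}{2}|\geq 2^{-P/6-7}$. Combined with Lemma \ref{bounds_for_images}, the three values $j(\tau_k)$ are pairwise separated by at least $1.15\cdot 10^6\cdot 2^{-P/6-7}>2^{-P/6+13}$, which is enormous compared to $2\cdot 10^4\cdot 2^{-P/3}$ for $P\geq 300$. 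The nearest roots are therefore distinct, and dividing $2\cdot 10^4\cdot 2^{-P/3}$ by the lower bound $5\cdot 10^4$ on $|j(\tau_k)|$ gives a relative precision below $2^{-P/3+2}$; the triangle inequality $|\beta_i-\beta_j|\geq|j(\tau_i)-j(\tau_j)|-2(2\cdot 10^4\cdot 2^{-P/3})$ produces the separation bound $2^{-P/6+8}$ with room to spare.

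The main technical nuisance is the careful bookkeeping of numerical constants required to ensure the $2^{-P/6}$-scale separation between distinct $j(\tau_k)$ genuinely dominates the $2^{-P/3}$-scale discrepancy between corresponding roots of $f$ and $g$; the constraint $P\geq 300$ provides the slack needed for this comparison, exactly as in Lemma \ref{phi2discrepancy_points_1728}. The conversion from the hypothesis $|\tilde j|\geq 2^{-P/2}$ to a lower bound on $|\tau-\tfrac{1+i\sqrt 3}{2}|$ via Lemma \ref{j_to_tau_small} costs a factor of $3$ in the exponent (and a few additive constants), which is precisely why the target relative precision degrades from $2^{-P}$ to $2^{-P/3+2}$ and the target separation to $2^{-P/6+8}$.
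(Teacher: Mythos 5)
Your proposal follows the same approach as the paper's own proof step by step: bound the coefficients of $\Phi_2(j,z)-\Phi_2(\tilde j,z)$, evaluate at the three roots $j(\tau_k)$ of $\Phi_2(j,z)$, take a cube root to bound the distance to the nearest root of $\Phi_2(\tilde j,z)$, then use the hypothesis $|\tilde j|\geq 2^{-P/2}$ together with Lemma~\ref{j_to_tau_small} and Lemma~\ref{bounds_for_images} to show the three $j(\tau_k)$ (and hence the nearest roots) are separated on a $2^{-P/6}$ scale dominating the $2^{-P/3}$-scale discrepancy. Your numerical constants are a little tighter in places but land on the same final bounds; the argument is correct and essentially identical to the paper's.
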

\begin{proof}
Firstly, as in the proof of the Lemma \ref{phi2discrepancy_points_1728}, with $f(z) = \Phi_2(j,z)$ and $g(z) = \Phi_2(\tilde{j},z)$, we will bound the size of the coefficients of $f-g$. Let $\tilde{j}=j+\delta$. For the coefficient of $z^2$, we have
\begin{equation*}
 \lvert 2j\delta+\delta^2+1448\delta\rvert\leq 1500\lvert\delta\rvert,
\end{equation*}
for the coefficient of $z$,
\begin{equation*}
 \lvert 2976\delta j+1488\delta^2+40773375\delta\rvert\\
  \leq 4.1\cdot10^7\lvert\delta\rvert,
\end{equation*}
and for the constant term
\begin{equation*}
  \left\lvert 3\delta j^2+3\delta^2j+\delta^3-16200\delta j-16200\delta^2+8748000000\delta\right\rvert\leq 8.8\cdot10^9\lvert\delta\rvert.
 \end{equation*}
 Now evaluating $g(z)$ at $j(\tau')$, for $\tau'\in\{2\tau,\frac{\tau}{2},\frac{\tau+1}{2}\}$, as $\lvert j(\tau')\rvert\leq60000$, we have
 \begin{align*}
  \lvert g(j(2\tau))\rvert&\leq60000^2\cdot1500\lvert\delta\rvert + 60000\cdot4.1\cdot10^7\lvert\delta\rvert + 60000\cdot8.8\cdot10^9\lvert\delta\rvert\\
  &\leq 6\cdot10^{14}\lvert\delta\rvert.
 \end{align*}
Letting $\beta_0,\beta_1,\beta_2$ be the roots of $g(z)$, we have, as $\lvert\tilde{j}\rvert\geq2^{-P/2}$, which impels $\lvert j\rvert\geq 2^{-P/2-1}$,
\begin{align*}
 \lvert\beta_0-j(\tau_i)\rvert\lvert\beta_1-j(\tau_i)\rvert\lvert\beta_2-j(\tau_i)\rvert&\leq6\cdot10^{14}\lvert\delta\rvert\\
 &\leq2^{-P+50}\\
\end{align*}
Now for any $\beta_i$, letting $j(\tau_j)$ be the nearest root of $f(z)$, we have
\begin{equation*}
 \lvert\beta_i-j(\tau_j)\rvert\leq2^{-P/3+17}.
\end{equation*}
By Lemma \ref{j_to_tau_small}, as $\lvert j\rvert\geq2^{-P/2}$,  
\begin{equation*}
 \left\lvert \tau - \frac{1+i\sqrt{3}}{2}\right\rvert\geq 2^{-P/6-9 },
\end{equation*}
which yields, by Lemma \ref{bounds_for_images}, for $i\neq j$,
\begin{equation*}
 \lvert j(\tau_i)-j(\tau_j)\rvert\geq 2^{-P/6+9}.
\end{equation*}
Similarly to the previous lemma, we now observe that, with $\beta_i$ the closest root to $j(\tau_i)$,
\begin{align*}
 \lvert\beta_i-\beta_j\rvert &\geq \lvert j(\tau_i) - j(\tau_j)\rvert - 2\cdot2^{-P/3+17}\\
 &\geq 2^{-P/6+9} - 2^{-P/3+17}\\
 &\geq 2^{-P/6+8}
\end{align*}
and so $\beta_i$, $\beta_j$ are distinct. So each root has a unique closest associated root, with relative precision
\begin{equation*}
 \frac{2^{-P/3+17}}{\lvert j(\tau_j)\rvert} \leq 2^{-P/3+2}.
\end{equation*}
\end{proof}

\begin{proposition}
Let $j=j(\tau)$, where $\left\lvert\tau-\frac{1+i\sqrt{3}}{2}\right\rvert\leq2^{-31}$, and $\tilde{j}$ be an approximation to $j$ of relative precision $2^{-P}$ such that $\lvert\tilde{j}\rvert\geq2^{-P/2}$. Let
\begin{equation*}
 \tau_0 = \frac{1+i\sqrt{3}}{2}+\sqrt[3]{\frac{6\tilde{j}}{j^{(3)}(i)}},
\end{equation*}
where the branch of the cube root is chosen arbitrarily.
Then Newton's method applied to $\Phi_2(\tilde{j},z)$, with starting point $j(2\tau_0)$ will converge to either the root of $\Phi_2(\tilde{j},z)$ closest to $j(2\tau)$, the root of $\Phi_2(\tilde{j},z)$ closest to $j\left(\frac{\tau}{2}\right)$, or the root of $\Phi_2(\tilde{j},z)$ closest to $j\left(\frac{\tau+1}{2}\right)$, and after $[2\log P]$ steps will produce an approximation to either $j(2\tau)$, $j\left(\frac{\tau}{2}\right)$, or $j\left(\frac{\tau+1}{2}\right)$ of relative precision $2^{-P/3+3}$.
\end{proposition}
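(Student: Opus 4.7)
The plan is to adapt the argument from the previous near--$1728$ proposition to the cubic ramification of $j$ at $\rho = \tfrac{1+i\sqrt{3}}{2}$. Writing $\tau = \rho + \delta$, Lemma \ref{taylor_series_at_i} gives $\bigl\lvert j(\tau) - \tfrac{j^{(3)}(\rho)}{6}\delta^{3}\bigr\rvert \leq 0.07\lvert\delta\rvert^{3}$; combining this with the hypothesis $\lvert\tilde{j}-j\rvert \leq 2^{-P}\lvert j\rvert$ and the bound $\lvert j\rvert \leq 4.6\cdot 10^{4}\lvert\delta\rvert^{3}$ stated just before Lemma \ref{j_to_tau_small}, the chosen cube root $\epsilon = \sqrt[3]{6\tilde{j}/j^{(3)}(\rho)}$ satisfies $\lvert\epsilon^{3}-\delta^{3}\rvert \leq c\lvert\delta\rvert^{3}$ for a small absolute constant $c$ (of order $10^{-6}$). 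Factoring $\epsilon^{3}-\delta^{3} = (\epsilon-\delta)(\epsilon-\omega\delta)(\epsilon-\omega^{2}\delta)$ with $\omega = e^{2\pi i/3}$ then forces one of the three factors to have modulus at most $c^{1/3}\lvert\delta\rvert$, so $\epsilon$ is within a small multiple of $\lvert\delta\rvert$ of one of $\delta$, $\omega\delta$, or $\omega^{2}\delta$.

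Next I would identify the three cube--root branches with the three preimages under the $2$--isogeny. Refining the computation of Lemma \ref{preimage_closeness}, the three $\mathrm{SL}_{2}(\mathbb{Z})$--translates $2\tau-1$, $1-2/\tau$, $-1-2/(\tau-1)$ all lying in $\mathcal{F}$ are, to leading order in $\delta$, $i\sqrt{3} + 2\delta$, $i\sqrt{3} + 2\omega^{2}\delta$, and $i\sqrt{3} + 2\omega\delta$ respectively, while $2\tau_{0} = (1+i\sqrt{3}) + 2\epsilon$ is $\mathrm{SL}_{2}(\mathbb{Z})$--equivalent to $i\sqrt{3} + 2\epsilon$ in $\mathcal{F}$. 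Applying the linearisation of $j$ at $i\sqrt{3}$ from Lemma \ref{taylor_series_at_2i}, together with the bound $\lvert j'(i\sqrt{3})\rvert \leq 334600$, then yields $\lvert j(2\tau_{0}) - j(\tau^{\ast})\rvert = O(\lvert\delta\rvert)$ for the appropriate $\tau^{\ast} \in \{2\tau,\tau/2,(\tau+1)/2\}$.

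The rest of the argument imitates the near--$1728$ case almost verbatim. Let $\beta_{0},\beta_{1},\beta_{2}$ denote the roots of $\Phi_{2}(\tilde{j},z)$, with $\beta_{0}$ the one closest to $j(2\tau_{0})$. Lemma \ref{bounds_for_images} gives pairwise separation between $1.15\cdot 10^{6}\lvert\delta\rvert$ and $1.34\cdot 10^{6}\lvert\delta\rvert$ for the three images $j(2\tau),j(\tau/2),j((\tau+1)/2)$, and Lemma \ref{phi2discrepancy_points_0} transfers these separations, with negligible loss, to the $\beta_{k}$. These bounds then produce $\lvert\Phi_{2}(\tilde{j},j(2\tau_{0}))\rvert = O(\lvert\delta\rvert^{3})$, $\lvert\Phi_{2}'(\tilde{j},\cdot)\rvert \gtrsim \lvert\delta\rvert^{2}$, and $\lvert\Phi_{2}''(\tilde{j},\cdot)\rvert = O(\lvert\delta\rvert)$ uniformly on a disc of radius $\sim\lvert\delta\rvert$ about $j(2\tau_{0})$; the Kantorovich constant $h = K\eta$ is then bounded by a fixed quantity well below $\tfrac{1}{2}$, guaranteeing convergence, and the quadratic rate in Theorem \ref{kantorovich} yields absolute precision $2^{-P}$ after $[2\log P]$ iterations. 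A final appeal to Lemma \ref{phi2discrepancy_points_0} converts this into the claimed relative precision $2^{-P/3+3}$ with respect to the corresponding root of $\Phi_{2}(j,z)$. The main subtlety is the branch--selection step of the second paragraph: one must verify that the three possible values of $\epsilon$ correspond bijectively, via the order--$3$ stabiliser of $\rho$ in $\mathrm{PSL}_{2}(\mathbb{Z})$, to the three preimages of $\tau$ under the $2$--isogeny, so that regardless of which branch of the cube root is chosen, $j(2\tau_{0})$ lies within $O(\lvert\delta\rvert)$ of some root of $\Phi_{2}(\tilde{j},z)$.
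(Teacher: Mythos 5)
Your outline follows the paper's proof step for step: Taylor expansion of $j$ at $\rho=\tfrac{1+i\sqrt{3}}{2}$, translation of the cube--root ambiguity to the three $\mathrm{SL}_2(\mathbb{Z})$--translates $2\tau-1$, $1-2/\tau$, $-1-2/(\tau-1)$ via the order--$3$ stabiliser of $\rho$, the transfer through Lemmas \ref{preimage_closeness}, \ref{taylor_series_at_2i}, \ref{bounds_for_images}, \ref{phi2discrepancy_points_0}, and then the Kantorovich estimate with $\lvert\Phi_2\rvert\lesssim\lvert\delta\rvert^{3}$, $\lvert\Phi_2'\rvert\gtrsim\lvert\delta\rvert^{2}$, $\lvert\Phi_2''\rvert\lesssim\lvert\delta\rvert$ on a disc of radius $\sim\lvert\delta\rvert$. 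Two small points where the paper does slightly more than you: (i) rather than deducing $\lvert\epsilon-\delta_0\rvert\lesssim c^{1/3}\lvert\delta\rvert$, the paper uses that the two farther cube roots of $\delta^3$ are separated from $\epsilon$ by essentially $\lvert\delta\rvert$ each (since $\epsilon$ lies very near the circle $\lvert z\rvert=\lvert\delta\rvert$), giving the linear bound $\lvert\epsilon-\delta_0\rvert\leq1.6\cdot10^{-6}\lvert\delta\rvert$; your weaker $c^{1/3}$ bound still works for the Kantorovich criterion but with less room to spare. (ii) For the rate of convergence $\frac{1}{2^k}(2h)^{2^k}\frac{\eta}{h}$, the paper also establishes a \emph{lower} bound $\lvert\Phi_2''\rvert\gtrsim\lvert\delta\rvert$ (via the coefficient of $z^2$ in $\Phi_2(\tilde{j},z)$ together with Lemma \ref{taylor_series_at_2i}) to control $\eta/h$; you don't mention this, though the iteration count $[2\log P]$ turns out to be insensitive to it. Overall this is the same argument as the paper.
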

\begin{proof}
 Let $\tau = \frac{1+i\sqrt{3}}{2}+\delta$. By Lemma \ref{taylor_series_at_i},
 \begin{equation*}
  \left\lvert j(\tau) - \frac{j^{(3)}\left(\frac{1+i\sqrt{3}}{2}\right)}{6}\delta^3\right\rvert\leq 0.07\lvert\delta\rvert^3,
 \end{equation*}
so that
\begin{equation*}
  \left\lvert \frac{6\tilde{j}}{j^{(3)}\left(\frac{1+i\sqrt{3}}{2}\right)} -\delta^3\right\rvert\leq 1.6\cdot10^{-6}\lvert\delta\rvert^3.
\end{equation*}
We let $\epsilon = \sqrt[3]{\frac{6\tilde{j}}{j^{(3)}\left(\frac{1+i\sqrt{3}}{2}\right)}}$. Considering the geometry of the cube roots of $\delta^3$, the product of the two furthest from $\epsilon$ is at least $\lvert\delta\rvert^2$, so letting the closest be $\delta_0$, we have
\begin{equation*}
\lvert\epsilon - \delta_0\rvert\leq 1.6\cdot10^{-6}\lvert\delta\rvert.
\end{equation*}
We now show that any branch of the cube root taken for $\epsilon$ will result $2\tau_0-1$ being very close to one of the $\text{\normalfont SL}_2(\mathbb{Z})$--equivalent elements of $\mathcal{F}$ to one of $2\tau,\frac{\tau}{2}, \frac{\tau+1}{2}$. We have
\begin{align*}
 \left\lvert\frac{1+i\sqrt{3}}{2}+\epsilon- \left(-\frac{1}{\tau}+1\right)\right\rvert&=\frac{\left\lvert \tau+i\sqrt{3}\tau+2+2\epsilon\tau-2\tau\right\rvert}{2\lvert\tau\rvert}\\
 &=\frac{\left\lvert -\frac{1}{2}-i\frac{\sqrt{3}}{2}-\delta+i\frac{\sqrt{3}}{2} - \frac{3}{2}+i\sqrt{3}\delta+2+(1+i\sqrt{3})\epsilon+2\epsilon\delta\right\rvert}{2\lvert\tau\rvert}\\
 &=\frac{\left\lvert(1+i\sqrt{3})\epsilon-(1-i\sqrt{3})\delta+2\epsilon\delta\right\rvert}{\left\lvert2\tau\right\rvert}\\
 &\leq\lvert\epsilon - e^{4i\pi/3}\delta\rvert+\frac{\lvert\delta\epsilon\rvert}{2},
\end{align*}
and 
\begin{align*}
 \left\lvert\frac{1+i\sqrt{3}}{2}+\epsilon- \left(-\frac{1}{\tau-1}\right)\right\rvert&=\frac{\left\lvert \tau-1+i\sqrt{3}(\tau-1)+2+2\epsilon(\tau-1)\right\rvert}{2\lvert\tau\rvert}\\
 &=\frac{\left\lvert -\frac{1}{2}+i\frac{\sqrt{3}}{2}+(1+i\sqrt{3})\delta-i\frac{\sqrt{3}}{2}-\frac{3}{2}+2+(-1+i\sqrt{3})\epsilon +2\epsilon\delta\right\rvert}{2\lvert\tau\rvert}\\
 &=\frac{\left\lvert(1-i\sqrt{3})\epsilon-(1+i\sqrt{3})\delta-2\epsilon\delta\right\rvert}{\left\lvert2\tau\right\rvert}\\
 &\leq\lvert\epsilon - e^{2i\pi/3}\delta\rvert+\frac{\lvert\delta\epsilon\rvert}{2}.
\end{align*}
In particular, the difference between $2\tau_0-1$ and the closest of the $\text{\normalfont SL}_2(\mathbb{Z})$--equivalent elements of $\mathcal{F}$ to $2\tau,\frac{\tau}{2}, \frac{\tau+1}{2}$, which we let be $\tau_1$, and the others $\tau_2,\tau_3$, is bounded by
\begin{equation*}
 2\left\lvert\epsilon-\delta_0\right\rvert+\lvert\delta\epsilon\rvert\leq3.3\cdot10^{-6}\lvert\delta\rvert.
\end{equation*}
Now as the absolute value of the derivative of $j(z)$ is bounded in absolute value by $3.4\cdot10^5$ between $\tau_0$ and $\tau_1$, we have
\begin{equation*}
 \left\lvert j(2\tau_0)-j(\tau_1)\right\rvert\leq 1.2\lvert\delta\rvert.
\end{equation*}
Now by the bound of Lemma \ref{phi2discrepancy_points_0} on the discrepancy of the roots of $\Phi_2(\tilde{j},z)$ and $\Phi_2(j,z)$, and the bounds on the distances between the distinct roots of Lemma \ref{bounds_for_images}, letting $\beta_0$ be the closest root of $\Phi_2(\tilde{j},z)$ to $j(2\tau_0)$, and $\beta_1,\beta_2$ the other two roots, we collect the relevant bounds for Kantorovich's criterion,
\begin{align*}
 \left\lvert j(2\tau_0) - \beta_0\right\rvert&\leq 1.2\lvert\delta\rvert\\
 \left\lvert j(2\tau_0) - \beta_1\right\rvert, \left\lvert j(2\tau_0) - \beta_2\right\rvert&\leq 1.35\cdot10^6\lvert\delta\rvert\\
 \left\lvert j(2\tau_0) - \beta_1\right\rvert, \left\lvert j(2\tau_0) - \beta_2\right\rvert&\geq 1.14\cdot10^6\lvert\delta\rvert,
\end{align*}
and with $r=4\lvert\delta\rvert$, for any $\tau'$ such that $\lvert\tau'-2\tau_0\rvert\leq4\lvert\delta\rvert$, we have, as the derivative of $j$ is bounded in absolute value by $3.4\cdot10^{5}$ here,
\begin{align*}
 \left\lvert j(\tau') - \beta_0\right\rvert&\leq 1.37\cdot10^6\lvert\delta\rvert\\
 \left\lvert j(\tau') - \beta_1\right\rvert, \left\lvert j(\tau') - \beta_2\right\rvert&\leq 2.71\cdot10^6\lvert\delta\rvert\\
\end{align*}
Now we bound the various terms of Kantorovich's criterion. Firstly,
\begin{align*}
 \lvert\Phi_2(\tilde{j},j(2\tau_0))\rvert&\leq 1.2\lvert\delta\rvert\cdot(1.35\cdot10^6\lvert\delta\rvert)^2\\
 &\leq 2.2\cdot10^{12}\lvert\delta\rvert^3,
\end{align*}
for the first derivative, we have
\begin{align*}
 \lvert\Phi_2'(\tilde{j},j(2\tau_0))\rvert&\geq (1.14\cdot10^6\lvert\delta\rvert)^2 - 2\cdot1.2\lvert\delta\rvert\cdot1.35\cdot10^6\lvert\delta\rvert\\
 &\geq 1.2\cdot10^{12}\lvert\delta\rvert^2,
\end{align*}
and for the second derivative,
\begin{align*}
 \lvert\Phi_2''(\tilde{j},j(\tau'))\rvert&\leq 2\cdot1.37\cdot10^6\lvert\delta\rvert+4\cdot2.71\cdot10^6\lvert\delta\rvert\\
 &\leq1.36\cdot10^7\lvert\delta\rvert,
\end{align*}
so that we have
\begin{equation*}
 \frac{\lvert\Phi_2(\tilde{j},j(2\tau_0))\rvert\lvert\Phi_2''(\tilde{j},j(2\tau_0))\rvert}{\lvert\Phi_2'(\tilde{j},j(2\tau_0))\rvert^2}\leq\frac{2.2\cdot10^{12}\lvert\delta\rvert^3\cdot1.36\cdot10^7\lvert\delta\rvert}{(1.2\cdot10^{12}\lvert\delta\rvert^2)^2}\leq 2^{-15}<\frac{1}{2},
\end{equation*}
and for the condition on $r$ we have
\begin{align*}
 r &=4\lvert\delta\rvert\\
 2\eta&=2\frac{\lvert\Phi_2(\tilde{j},j(2\tau_0))\rvert}{\lvert\Phi_2'(\tilde{j},j(2\tau_0))\rvert}\leq3.8\lvert\delta\rvert,
\end{align*}
ensuring convergence. For the rate of convergence, we have the upper bound on the first derivative as follows,
\begin{align*}
 \left\lvert\Phi_2'(\tilde{j},j(2\tau_0))\right\rvert \leq 1.4\cdot10^{12}\lvert\delta\rvert^2.
\end{align*}
For the lower bound on the second derivative, we have that
\begin{equation*}
 \lvert\Phi_2''(\tilde{j},j(\tau'))\rvert = \lvert6 j(\tau') - 2(\beta_0+\beta_1+\beta_2)\rvert.
\end{equation*}
Now the second term in the absolute value is the coefficient of $z^2$ of $\Phi_2(\tilde{j},z)$, which is equal to 
\begin{equation*}
 -\tilde{j}^2+1488\tilde{j}-162000.
\end{equation*}
By Lemma \ref{taylor_series_at_i}, $
\lvert j\rvert\leq\left(\frac{\left\lvert j\left(\frac{1+i\sqrt{3}}{2}\right)\right\rvert}{6}+0.07\right)\lvert\delta\rvert^3$, and as $\lvert\tilde{j}-j\rvert\leq2^{-P}\leq\lvert j\rvert^2$, $\lvert\tilde{j}\rvert\leq1.01\lvert j\rvert$, so
\begin{equation*}
 \lvert-\tilde{j}^2+1488\tilde{j}\rvert\leq4.13\cdot10^8\lvert\delta\rvert^3\leq0.1\lvert\delta\rvert.
\end{equation*}
In particular, $-(\beta_1+\beta_2+\beta_3)=162000+\theta_1$, where $\lvert\theta_1\rvert\leq 0.1\lvert\delta\rvert$.
As $\lvert\delta\rvert\leq2^{-31}$, $\lvert\tau'-2\tau_0\rvert\leq4\lvert\delta\rvert$, and $\lvert2\tau_0-1-i\sqrt{3}\rvert\leq(2+3.3\cdot10^{-6})\lvert\delta\rvert$, 
\begin{equation*}
 \lvert\tau'-1-i\sqrt{3}\rvert\leq 6.1\lvert\delta\rvert\leq2^{-28},
\end{equation*}
and hence by Lemma \ref{taylor_series_at_2i},
\begin{equation*}
 \lvert j(\tau') - j(i\sqrt{3})\rvert\geq \lvert  j'(i\sqrt{3})\delta\rvert-1.3\lvert\delta\rvert\geq 334000\lvert\delta\rvert.
\end{equation*}
In particular, as $j(i\sqrt{3})=54000$,  $j(2\tau_0) = 54000+\theta_2$, where $\lvert\theta_2\rvert\geq334000\lvert\delta\rvert$.
Now combining these bounds, we have
\begin{align*}
  \lvert 6 j(\tau') - 2(\beta_0+\beta_1+\beta_2)\rvert &=\lvert 324000+6\theta_2-324000+2\theta_1\rvert\\
  &\geq 6\lvert\theta_2\rvert - 2 \lvert\theta_1\rvert\\
  &\geq6\cdot334000\lvert\delta\rvert - 0.2\lvert\delta\rvert\\
  &\geq2\cdot10^6\lvert\delta\rvert.
\end{align*}
Returning to the convergence, we have
\begin{equation*}
 \frac{\lvert\Phi_2'(\tilde{j},j(2\tau_0))\rvert}{\lvert\Phi_2''(\tilde{j},j(\tau'))\rvert}\leq\frac{1.4\cdot10^{12}\lvert\delta\rvert^2}{2\cdot10^6\lvert\delta\rvert}\leq2^{-10}, 
\end{equation*}
so a bound for the rate of convergence is
\begin{equation*}
 2^{-15\cdot2^k}.
\end{equation*}
In particular, to obtain an absolute precision of $2^{-P}$, $[2\log P]$ steps will suffice.  The approximant obtained will then, by Lemma \ref{phi2discrepancy_points_0}, be an approximation of one of $j(2\tau),j\left(\frac{\tau}{2}\right)$, or $j\left(\frac{\tau+1}{2}\right)$, of relative precision at least $2^{-P/3+3}$.
\end{proof}

\subsubsection{Running time}
Now we have shown the time required to obtain an approximation to one of $j(2\tau),j\left(\frac{\tau}{2}\right),$ or $j\left(\frac{\tau+1}{2}\right)$ is $O(M(P)\log P)$, and the obtained value may then be used as an input to Newton's method on the compact set described in the subsequent section. In order to determine which of $j(2\tau),j\left(\frac{\tau}{2}\right),j\left(\frac{\tau+1}{2}\right)$ was computed above, after obtaining an inverse $\tau_0$, we compute $j(2\tau_0)$ and $j\left(\frac{\tau_0}{2}\right)$, and return the argument corresponding to whichever was closest to our initial approximation $\tilde{j}$ to $j$, after applying elements of $\text{\normalfont SL}_2(\mathbb{Z})$ to move it to the fundamental domain (which takes $O(P)$ time). In particular we will obtain an approximation of precision at least $2^{-P/3+12}$.

\subsection{Newton iteration on the compact set}

For $\tau$ such that $\tau \in \mathcal{F}$, $\text{\normalfont Im}(\tau)\leq 3.1$, $\lvert \tau - i\rvert\geq2^{-32}$, $\left\lvert\tau-\frac{1+i\sqrt{3}}{2}\right\rvert\geq 2^{-32}$, we make use of an algorithm due to Dupont, \cite{fastj} for the quasilinear evaluation of $j(\tau)$ to relative precision, in order to invert $j(z)$ by the secant method. As we are considering a compact set, there is some fixed precision our starting points for the secant method may be in order to obtain convergence for any $\tau$.
We compute the low precision inverses of $j$ by the formula
\begin{align*}
 \tau_0 &= i\frac{_2F_1\left(\frac{1}{6},\frac{5}{6},1,\frac{1}{2}+\frac{1}{2}\sqrt{1-\frac{1728}{\tilde{j}}}\right)}{_2F_1\left(\frac{1}{6},\frac{5}{6},1,\frac{1}{2}-\frac{1}{2}\sqrt{1-\frac{1728}{\tilde{j}}}\right)},\\
 \tau_1 &= \tau_0 - \frac{j(\tau_0)-j}{j'(\tau_0)},
\end{align*}
and we note that as $j$ is bounded away from $1728$, the arguments of the Gaussian hypergeometric functions are bounded away from their singularities at $0$ and $1$, and $j'$ is bounded away from $0$, so computation of the two points to a fixed precision takes a fixed amount of time. Upon a failure of convergence (or slow convergence), we may simply increase the precision of our starting points and repeat the process -- as there is a uniform bound on the required precision, this requires only constant time. As the secant method converges quadratically,  and the evaluation of $j(z)$ via the algorithm of $\cite{fastj}$ takes $O(M(P)\log P)$ time, the computational complexity of obtaining an approximation of precision $2^{-P+1}$ requires time $O(M(P)(\log P)^2)$. 

A similar analysis to that of Lemma \ref{taylor_series_at_i} gives a bound of \begin{equation*}
\left\lvert j^{k}(z_0)\right\rvert\leq3\cdot10^8\cdot14^kk!                                                                                  \end{equation*}
for $z_0$ in our compact set, so that if $z_1=z_0+\lvert\delta\rvert$, with $\lvert\delta\rvert\leq2^{-150}$, 
\begin{align*}
 \lvert j(z_0)+j'(z_0)\delta-j(z_1)\rvert&\leq3\cdot10^8\sum_{n=2}^\infty\lvert\delta\rvert^n14^n\\
 &\leq3\cdot10^8\cdot14^2\cdot2^{-150}\cdot\lvert\delta\rvert\sum_{n=0}^\infty2^{-150n}14^n\\
 &\leq5\cdot10^{-35}\lvert\delta\rvert.
\end{align*}
It may be verified that $\lvert j'(z)\rvert\geq10^{-19}$ in our compact set, so
\begin{equation*}
 \lvert j(z_0)-j(z_1)\rvert\geq \lvert j'(z_0)\rvert\lvert\delta\rvert-5\cdot10^{-35}\lvert\delta\rvert\geq 10^{-20}\lvert\delta\rvert,
\end{equation*}
and in particular once we have computed by the secant method $\tau^*$ such that $\lvert \tilde{j}-j(\tau^*)\rvert\leq2^{-P}$, $\lvert j-j(\tau^*)\rvert\leq3\cdot10^8\cdot2^{-P}$, and so, with $j(\tau)=j$, and $\tau^*=\tau+\delta$, we have 
\begin{equation*}
 \lvert\delta\rvert\leq 3\cdot10^8\cdot10^{20}\cdot2^{-P}\leq2^{-P+100}.
\end{equation*}
\subsection{$j$ very close to $0$ or $1728$}
Now if $\lvert\tilde{j}\rvert\leq 2^{-P/2}$, or $\lvert\tilde{j}-1728\rvert\leq2^{-P/3}$, we simply return $\frac{1+i\sqrt{3}}{2}$ or $i$ respectively, and this will, by Lemma \ref{j_to_tau_small}, be an approximation to the inverse of $j$ of absolute, and as $\lvert\tau\rvert\geq 1$, relative precision $2^{-P/6}$.

\section{Testing CM}

 Firstly, given the input of the $j$--invariant of an elliptic curve $E$, its degree $d$, and a bound on its height $H\geq e^e$, we may bound the maximum discriminant from which $j$ may arise, if $E$ were to have complex multiplication. We first consider $\lvert D\rvert\geq16$. The principal form of a discriminant has an associated $\tau$ of imaginary part $\frac{i \sqrt{\lvert D\rvert}}{2}$, so as $\lvert D\rvert\geq 16$, $M(j(\tau))\geq e^{\pi\sqrt{\lvert D\rvert}}-2079 \geq e^{3.13\sqrt{\lvert D\rvert}}$. In particular, as $M(j)\leq H^d$, 
 \begin{equation*}
  \lvert D\rvert\leq \frac{d^2(\log H)^2}{9.7}.
 \end{equation*}
 Now the $\tau$ associated to a binary quadratic form of discriminant of absolute value $\leq N$ has real part bounded in height by $2N$, and the square of the imaginary part bounded in height by $4N^2$, so we will determine if the preimage of $j$ is a quadratic irrational satisfying these conditions.

 We first bound the degree of  $j(z)$ at a quadratic irrational of discriminant $D$. For a fundamental discriminant $D$, we have the bound (Proposition 2.2, \cite{paulin}),
 \begin{equation*}
  h(D)\leq\frac{1}{\pi}\sqrt{\lvert D\rvert}(2+\log\lvert D\rvert),
 \end{equation*}
 and in the case of non--fundamental discriminants, by Theorem 7.4 of \cite{buell}, for an odd prime $p$,
 \begin{equation*}
  h(p^2D)\leq (p+1)h(D)\leq\frac{p+1}{p\pi}\sqrt{\lvert p^2D\rvert}(2+\log\lvert D\rvert)\leq\frac{4}{3\pi}\sqrt{\lvert p^2D\rvert}(2+\log\lvert p^2D\rvert)
 \end{equation*}
 and for $2$, if $D\not\equiv0(8)$,
 \begin{equation*}
 h(4D)\leq 3h(D)\leq \frac{3}{2\pi}\sqrt{\lvert 4D\rvert}(2+\log\lvert D\rvert)\leq \frac{3}{2\pi}\sqrt{\lvert 4D\rvert}(2+\log\lvert 4D\rvert)
 \end{equation*}
and if $D\equiv 0(4)$,
\begin{equation*}
 h(4D)\leq 2h(D)\leq\frac{1}{\pi}\sqrt{\lvert 4D\rvert}(2+\log\lvert D\rvert)\leq\frac{1}{\pi}\sqrt{\lvert 4D\rvert}(2+\log\lvert 4D\rvert).
\end{equation*}
In particular, we have the bound
\begin{equation*}
 h(D)\leq\frac{3}{2\pi}\sqrt{\lvert D\rvert}(2+\log\lvert D\rvert).
\end{equation*}
 For the Mahler measure of $j(\tau)$, as $\lvert j(\tau)\rvert\leq e^{2\pi\text{\normalfont Im}(\tau)}+2079\leq9.1e^{2\pi\text{\normalfont Im}(\tau)}$, we have the upper bound in terms of the reduced binary quadratic forms $ax^2+bxy+cy^2$ of discriminant $D$,
 \begin{equation*}
  M(j(\tau))\leq9.1^{h(D)}\exp\left(2\pi\sqrt{\lvert D\rvert}{\sum_{\substack{(a,b,c)\\ \text{ reduced}}}\frac{1}{a}}\right).
 \end{equation*}
 The number of times each $a$ may occur is bounded by the number of solutions of $b^2\equiv d(2a)$, which is bounded by twice the number of distinct divisors $r(a)$ of $a$. By \cite{divisor_problem}, we have the bound
 \begin{equation*}
  A(x) = 2\sum_{1\leq a\leq x}r(A)\leq 2x\log x+0.4x+2x^{1/2},
 \end{equation*}
 so that by Abel's summation formula,
 \begin{align*}
  \sum_{1\leq a\leq h(D)}\frac{r(a)}{a} &= \frac{A(h(D))}{h(D)}+\int_{1}^{h(D)}\frac{A(y)}{y^2}dy\\
  &\leq \frac{(\log h(D))^2}{2}+1.2\log(h(D))+2.2\\
  &\leq 1.3\log\lvert D\rvert^2,
 \end{align*}
 yielding a bound of
 \begin{equation*}
  M(j(\tau))\leq9.1^{h(D)}e^{5.2\pi\sqrt{\lvert D\rvert}(\log\lvert D\rvert)^2}\leq e^{5.9\pi\sqrt{\lvert D\rvert}(\log\lvert D\rvert)^2}.
 \end{equation*}
 In particular, by Liouville's inequality, if $j\neq j(\tau)$, where $\tau$ is of discriminant $D$,
 \begin{align*}
  \lvert j - j(\tau)\rvert &\geq 2^{-\frac{3d}{2\pi}\sqrt{\lvert D\rvert}(2+\log\lvert D\rvert)} H^{-\frac{3d}{2\pi}\sqrt{\lvert D\rvert}(2+\log\lvert D\rvert)}e^{5.9\pi d\sqrt{\lvert D\rvert}(\log\lvert D\rvert)^2},
 \end{align*}
 and substituting our bound on $\lvert D\rvert$, we obtain the lower bound
\begin{equation*}
 \exp\left(-30d^2\log H(\log d+\log\log H)^2\right).
\end{equation*}
  Now as $\lvert j\rvert\leq H^d$, letting $j=j(z_0)$, where $z_0\in\mathcal{F}$, as $\lvert j(z_0)\rvert\geq e^{2\pi\text{\normalfont Im}(z_0)}-2079$, we have $\text{\normalfont Im}(z_0)\leq \frac{d\log H}{2\pi}+2$. By differentiating the $q$--expansion of $j$, and taking an upper bound, (note that the sum of the absolute values of the terms of the tail is decreasing), we have, as $d\log H\geq0$,
  \begin{equation*}
   \lVert j'\rVert_{B(z_0,1)}\leq 2\pi e^{d\log H+6\pi}+800\leq e^{d\log H+21},
  \end{equation*}
so that if $\lvert z_0 - \tau\rvert\leq1$, 
\begin{equation*}
 \lvert j - j(\tau)\rvert\leq\lvert z_0 - \tau\rvert e^{d\log H+21}.
\end{equation*}
Combining this with our separation result, noting that $\log d+\log\log H\geq1$, if
\begin{equation*}
 \lvert z_0 - \tau\rvert\leq \exp\left(-31d^2\log H(\log d+\log\log H)^2-21\right),
\end{equation*}
then $j=j(\tau)$, and so $j$ is a singular modulus. Now as our method of inverting $j$ from an input of regulated precision $2^{-P}$ obtains its inverse with relative precision at least $2^{-P/6}$, to obtain a result of absolute precision $2^{-Q}$, as $\lvert \tau\rvert\leq\frac{d\log H}{2\pi}+2$, we will need an input of relative precision $2^{-6Q-2\log(d\log H+2)}$. In particular, with an input of relative precision
\begin{equation*}
 2^{-300d^2\log H(\log d+\log\log H)^2-200},
\end{equation*}
our computed approximation $\tilde{z_0}$ to $z_0$ will have sufficient precision to determine whether
\begin{equation*}
 \lvert z_0 - \tau\rvert\leq\lvert z_0 - \tilde{z_0}\rvert+\lvert\tau-\tilde{z_0}\rvert
\end{equation*}
is sufficiently small.
In order to obtain our candidate $\tau$, we develop the continued fractions of the real part and the square of the imaginary part of our computed inverse $\tilde{\tau}$ -- as the discriminants under consideration are bounded by $\lvert D\rvert$, the distance between the real parts of any two preimages of the singular moduli is at least 
\begin{equation*}
 19^{-1}d^{-4}(\log H)^{-4},
\end{equation*}
and of the squares imaginary parts is at least
\begin{equation*}
 19^{-2}d^{-8}(\log H)^{-8}.
\end{equation*}
So we develop the continued fractions of the real part of $\tilde{z_0}$ and the square of the imaginary part of $\tilde{z_0}$ until their difference from the convergents is bounded by  
\begin{equation*}
 19^{-3}d^{-8}(\log H)^{-8},
\end{equation*}
for once this holds of the convergents, they will form the only possible quadratic irrational inverse of $j$.
If the height of the real convergent $c_r$ is greater than $\frac{d^2(\log H)^2}{9.7}$, or the height of the square of the imaginary convergent $c_i$ is greater than $\frac{d^4(\log H)^4}{90}$, or the square root of $c_i$ is a rational number, then we may conclude that $j$ is not a singular modulus -- otherwise, letting $\tau = c_r+i\sqrt{c_i}$, we may test if $\lvert \tilde{z_0}-\tau\rvert$ satisfies our condition for $j=j(\tau)$. If so, then $j$ is a singular modulus, and otherwise $j$ is not a singular modulus for $\tau$ of discriminant $\lvert D\rvert\geq16$. It is clear that the computational complexity of the calculation of the convergents and other operations in this algorithm is dominated by that of inverting $j$, so we obtain a running time, with $T=d^2\log H(\log d+\log\log H)^2$ of 
\begin{equation*}
 O(M(T)(\log T)^2).
\end{equation*}
For testing discriminants with $\lvert D\rvert\leq16$, we may simply take the list of all $\tau$ of discriminant $\leq16$, and test whether $j$ is equal to them. The degree of $j(\tau)$ for such $\tau$ is bounded by $2$, and its Mahler measure is bounded by $3\cdot10^6$, so if 
\begin{equation*}
 \lvert j-j(\tau)\rvert\leq2^{-2d}H^{-2d}\left(3\cdot10^6\right)^{-d}\leq\exp(2d(33+\log H)),
\end{equation*}
then $j=j(\tau)$. So with an approximation $\tilde{j(\tau)}$ to $j(\tau)$ of absolute precision $2^{-4d(33+\log H)+2}$, then we will be able to establish whether $j=j(\tau)$. By the algorithm to compute $j$ of \cite{fastj}, which requires time $O(M(P)\log P)$ for relative precision of $P$, as $\lvert j(\tau)\rvert\leq 3\cdot10^{6}$, computing $j$ to the required absolute precision possible in time
\begin{equation*}
 O(M(d\log H)(\log d+\log\log H)),
\end{equation*}
which again is $O\left(M(T)(\log T)^2\right)$.

\end{document}